\newtheorem{assumption}[equation]{Assumption}
\newtheorem{theorem}{Theorem}
\newtheorem{conjecture}{Conjecture}
\newtheorem{lemma}[equation]{Lemma}
\newtheorem{proposition}{Proposition}
\newcommand{\p}{{\mathbb P}}
\newcommand{\R}{{\mathbb{R}}}	
\newcommand{\N}{{\mathbb N}}	
\newcommand{\V}{{\mathcal V}}		
\newcommand{\cS}{\mathcal{S}}
\newcommand{\C}{\mathcal C}
\newcommand{\A}{\mathcal A}
\newcommand{\1}{\mathds{1}}
\newcommand{\bp}{\mathbf{p}}
\newcommand{\by}{\mathbf{y}}
\newcommand{\bx}{\mathbf{x}}
\newcommand{\bz}{\mathbf{0}}
\newcommand{\bo}{\mathbf{1}}
\title{The critical curve of long-range percolation on oriented trees}
\author{Olivier Couronné \and Sandro Gallo \and Leonardo T. Rolla}
\date{}
\providecommand{\keywords}[1]{\textbf{{Keywords:}} #1}
\begin{document}

\maketitle

\begin{abstract}
We consider a long-range percolation model on homogeneous oriented trees with several lengths. We obtain the critical surface as the set of zeros of a specific polynomial with coefficients depending explicitly on the lengths and the degree of the tree. Restricting to the case of two lengths, we obtain new bounds on the critical parameters, monotonicity properties, as well as continuity of the critical curve, plus some partial results concerning its convexity. Our proofs rely on the study of the properties of the characteristic polynomial of the transition matrix of a multi-step Markov chain related to the model. 
\end{abstract}

\keywords{Long range percolation, critical surface, absorbing Markov chain.}

\section{Introduction, definitions and results}

At each vertex of the rooted oriented $d$-ary tree, we add edges of lengths $k_1<k_2<\dots<k_m$, pointing to its $d^{k_i}$ descendants at $k_i$ generations below.
For $i=1,\dots,m$, we let edges of length $k_i$ be \emph{open} with probability $p_i\in(0,1)$, independently of all other edges.
An \emph{open path} is a sequence of vertices connected by open edges. For any pair of vertices $u$ and $v$, we write $u\rightarrow v$ if there exists an open path starting at $u$ and ending at $v$.
We denote $\mathcal C(v):=\{w:v\rightarrow w\}$ and say that \emph{there is percolation} if $\theta(\bp)>0$, where $\bp=(p_1,\dots,p_m)$ is the probability parametrising vector,
\[
\theta(\bp):=\p(|\mathcal C(o)|=\infty)
\]
and $o$ denotes the root of the tree.

Note that $\theta$ is non-decreasing in $\bp$.
Let $\cS_c \subseteq [0,1]^m$ denote the boundary between the set of parameters $\bp=(p_1,\ldots,p_m)$ for which there is percolation and the set of $\bp$ for which there is no percolation.
We refer to this region $\cS_c$ as the \emph{critical surface} (or, when $m=2$, the \emph{critical curve}).
The goal of this paper is to analyse $\cS_c$. 

\paragraph{A general result for the multi-edge model.}

Let $\Omega=\{0,1\}^{k_m}$ and denote its elements as $\bx=(x_1,\dots,x_{k_m})$.
Consider the following matrix doubly-indexed by $\{0,1\}^{k_m}\setminus\{0\}^{k_m}$:
\begin{equation}\label{def:matrixQ}
Q(\bx,\by)=
\left\{\begin{array}{ccc}
\prod_{j=1}^m (1-p_j)^{x_{k_m-k_j+1}} &\text{if}&\,y_{k_m}=0\,,\,\,y_i=x_{i+1},\,i=1,\dots,k_m-1,
\\
1-\prod_{j=1}^m (1-p_j)^{x_{k_m-k_j+1}} &\text{if}&\,y_{k_m}=1\,,\,\,y_i=x_{i+1},\,i=1,\dots,k_m-1.
\end{array}\right.
%\left\{\begin{array}{ccc}
%\prod_{i:x_{k_i}=1}(1-p_i)&\text{if}&\,y_1=0\,,\,\,y_i=x_{i-1},\,i=2,\dots,k_m\\
%1-\prod_{i:x_{k_i}=1}(1-p_i)&\text{if}&\,y_1=1\,,\,\,y_i=x_{i-1},\,i=2,\dots,k_m\,.
%\end{array}\right.
\end{equation} 
We denote by $P_Q$ the characteristic polynomial of $Q$ (which is parametrized by $\bp$).

\begin{theorem}\label{thm:multi-edges}
Suppose $\gcd \{k_1,\dots,k_m\} =1$.
Then
\begin{equation}
\nonumber
\cS_c\subseteq\{\bp\in[0,1/d^{k_1}]\times\dots\times[0,1/d^{k_m}]:P_Q(1/d)=0\}.
\end{equation}
\end{theorem}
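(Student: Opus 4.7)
The plan is to recast the existence of an infinite cluster $\mathcal C(o)$ as the survival of a multi-type Galton--Watson process whose types record the connectivity pattern of the $k_m$ most recent ancestors of each vertex. To each vertex $v$ at generation $n\ge k_m$ I would associate the state $\bx(v)=(x_1,\ldots,x_{k_m})\in\{0,1\}^{k_m}$ defined by $x_i=1$ iff the ancestor of $v$ at generation $n-k_m+i$ belongs to $\mathcal C(o)$; in particular $x_{k_m}=1$ iff $v\in\mathcal C(o)$. The first step is a conditional computation: given $\bx(v)$, the status of a child $v'$ of $v$ depends only on the edges of lengths $k_1,\ldots,k_m$ joining $v'$ to its $k_m$ closest ancestors, and those edges are independent of the edges that determine $\bx(v)$. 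This directly yields the kernel $Q$ in \eqref{def:matrixQ}: the first $k_m-1$ coordinates shift, and $y_{k_m}=0$ with probability $\prod_{j=1}^{m} (1-p_j)^{x_{k_m-k_j+1}}$, namely the probability that none of the ancestors of $v'$ lying in $\mathcal C(o)$ sends an open edge of the appropriate length to $v'$.

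Since $v'$ can only be reached directly from an ancestor within distance $k_m$, the all-zero state $\bz$ is absorbing. Restricted to $\{0,1\}^{k_m}\setminus\{\bz\}$, the process that counts the vertices of each type at generation $n$ is a multi-type Galton--Watson branching process with offspring mean matrix $dQ$. Classical theory then gives that this process survives with positive probability iff the Perron eigenvalue of $dQ$ exceeds $1$, equivalently iff the Perron eigenvalue of $Q$ exceeds $1/d$. Moreover, survival coincides with $\{|\mathcal C(o)|=\infty\}$: a non-absorbing state at generation $n$ certifies the existence of a vertex of $\mathcal C(o)$ in $[n-k_m+1,n]$, and these windows collectively exhaust all sufficiently large generations.

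For the final step, I would use the hypothesis $\gcd\{k_1,\ldots,k_m\}=1$ to establish irreducibility and aperiodicity of $Q$ on the transient states. Because $Q$ is a shift-and-append kernel, after $k_m$ steps the state equals the sequence of newly appended bits, and a Frobenius coin-problem argument based on the gcd condition lets one realise any prescribed such bit-sequence with positive probability while keeping the chain away from $\bz$ in between. Perron--Frobenius then gives a simple, real Perron eigenvalue $\lambda^\ast(\bp)$ depending continuously on $\bp$, so $\cS_c \subseteq \{\lambda^\ast(\bp)=1/d\} \subseteq \{P_Q(1/d)=0\}$. The box constraint $\bp\in[0,1/d^{k_1}]\times\cdots\times[0,1/d^{k_m}]$ will follow by a comparison argument: keeping only edges of length $k_i$ reduces the model to independent bond percolation on the $d^{k_i}$-ary tree of $k_i$-step descendants, which already percolates as soon as $p_i>1/d^{k_i}$.

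The main obstacle I anticipate is the irreducibility-and-aperiodicity argument for $Q$ under the gcd assumption; the branching-process identification, the Perron--Frobenius plus continuity step, and the box bounds are essentially standard once the transition kernel has been correctly interpreted.
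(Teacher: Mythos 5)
Your proposal is correct in its core, but it takes a genuinely different route from the one the paper develops in detail; it is, in fact, precisely the ``alternative approach'' that the authors mention in one sentence (a multi-type branching process whose survival is equivalent to percolation) but do not carry out. The paper instead restricts attention to a single spine $\V^\swarrow$, encodes connectivity to that spine as a $(k_m-1)$-step Markov chain $(Y_i)$, relates $u_n=\p(\C(o)\cap\V^{\swarrow n}\ne\emptyset)$ to the spectral radius of $Q$ via Gelfand's formula (Lemma~\ref{lemma:utorho}), and then closes the loop with a first-moment bound in one direction and a hand-built embedded Galton--Watson tree in the other (Lemma~\ref{lemma:percu}). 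Your approach dispenses with the spine and Gelfand's formula and reads off the dichotomy directly from multi-type branching theory: the type process you define is indeed a multi-type GW process (the offspring types of distinct parents depend on disjoint collections of fresh incoming edges, so the Markov property holds given the filtration), its restricted mean matrix is $dQ$, and survival is equivalent to $|\C(o)|=\infty$. This buys a cleaner reduction and even resolves the boundary case $\rho_Q=1/d$ (extinction a.s.), which the paper deliberately leaves open since only the continuity argument is needed anyway. What the paper's route buys in exchange is the explicit recursion~\eqref{eq:rec} on the $u_i$, which is reused verbatim in the proofs of Theorems~\ref{thm:bounds} and~\ref{thm:curvemonot} and in the heuristic for Conjecture~\ref{conj:convexity}; your route would require extra work to recover those.

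Two points deserve attention. First, the primitivity argument you flag as the main obstacle is exactly Lemma~\ref{lemma:primitive} in the paper, proved by a short modular-arithmetic / Frobenius-coin argument of the type you anticipate; that part is fine. Second, and more substantively, primitivity genuinely fails at points of $\cS_c$ where some coordinate $p_j=0$ (for instance $m=2$, $l=1$, $k=2$, $p=0$: the states $(1,1)$ and $\{(0,1),(1,0)\}$ do not communicate), so ``Perron--Frobenius then gives a simple, real Perron eigenvalue'' is not justified there. The paper patches this with a perturbation $\bp\mapsto\bp+(\varepsilon,\dots,\varepsilon)$ followed by $\varepsilon\downarrow 0$; alternatively one can invoke the weak Perron--Frobenius theorem, which asserts that the spectral radius of \emph{any} nonnegative matrix is an eigenvalue, primitive or not. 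Either fix makes your inclusion $\{\rho_Q=1/d\}\subseteq\{P_Q(1/d)=0\}$ valid on the whole box, including its faces. With that repair, your argument is sound.
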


The proof of Theorem~\ref{thm:multi-edges} is given in Section~\ref{sec:markov}.

Our next results will give further properties of the critical curve for the case $m=2$. But before that, let us illustrate Theorem~\ref{thm:multi-edges} with the simplest examples.

Whenever $m=2$, we will use $(l,k,p,q)$ instead of $(k_1,k_2,p_1,p_2)$, to lighten the notation.
In this case, we define the functions 
\begin{align*}
p_c(q):=\inf\{p\in(0,1):\theta(p,q)>0\}
\text{ and } 
q_c(p):=\inf\{q\in(0,1):\theta(p,q)>0\},
\end{align*}
leaving the dependency on $d$, $l$ and $k$ implicit.

For $l=1$ and $k=2$, the matrix $Q$, indexed by $\{01,10,11\}\times\{01,10,11\}$, becomes
\begin{equation}
Q=\left(\begin{array}{ccc}
0&1-p&p\\
q&0&0\\
0&(1-p)(1-q)&1-(1-p)(1-q)
\end{array}\right).
\end{equation}
According to Theorem~\ref{thm:multi-edges}, the critical curve is the set of $(p,q)\in[0,d^{-1}]\times[0,d^{-2}]$ satisfying $P_Q(1/d)=0$,
where $P_Q$ is the characteristic polynomial of $Q$.
So we get the following equation for the critical curve:
\[
\left\{(p,q)\in[0,1/d]\times[0,1/d^2]\,:\,p q^2 - \frac{pq}{d^2}-\frac{pq}{d} + \frac{p}{d^2} -q^2+ \frac{q}{d^2} + \frac{q}{d} - \frac{1}{d^3}=0\right\}.
\]
Solving this equation, we get, for $q < d^{-2}$,
\begin{equation}
\label{eq:pcexplicit}
p_c(q)=\frac{ q^2 - \frac{q}{d} - \frac{q}{d^2} + \frac{1}{d^3}}{q^2 - \frac{q}{d} - \frac{q}{d^2} +\frac{1}{d^2}}
,
\end{equation}
or, equivalently,
\[q_{c}(p)=\frac{1}{2d}+\frac{1}{2d^2}-\frac{\sqrt{(d-1)(3dp+d+p-1)}}{2d^2\sqrt{1-p}}.\]
When $d$ becomes large, with $p<1/d$, this value of $q_c$ is equivalent to the lower bound $(1-dp)/d^2$ of~\cite{de2023multirange}.
If we restrict to $p=q$, the critical parameter will depend only on $d$ and is given by the root of a degree-3 polynomial, which leads to an explicit formula.
For $l=1$ and $k=3$, the polynomial has degree four in $q$, and is thus solvable.

Using Theorem~\ref{thm:multi-edges}, one can make plots of the critical surface at any fixed $d,m,k_1,\dots,k_m$ that is computationally feasible.
For that, one needs to work with the characteristic polynomial of a ($2^{k_m}-1)\times( 2^{k_m}-1$) matrix whose entries depend on the vector $\bp$, and are computed using~\eqref{def:matrixQ}.
We refer to Figure~\ref{figure:k2} for $m=2$ with $k_1=1,k_2=2$.
For larger values of $k_2$, we can hardly distinguish between the critical curve and the lower bound. Figure~\ref{fig:3d} shows the case where $m=3$ and $k_1=1,k_2=2,k_3=3$.

\begin{figure}\label{figure:k2}
\begin{center}
\includegraphics[scale=0.5]{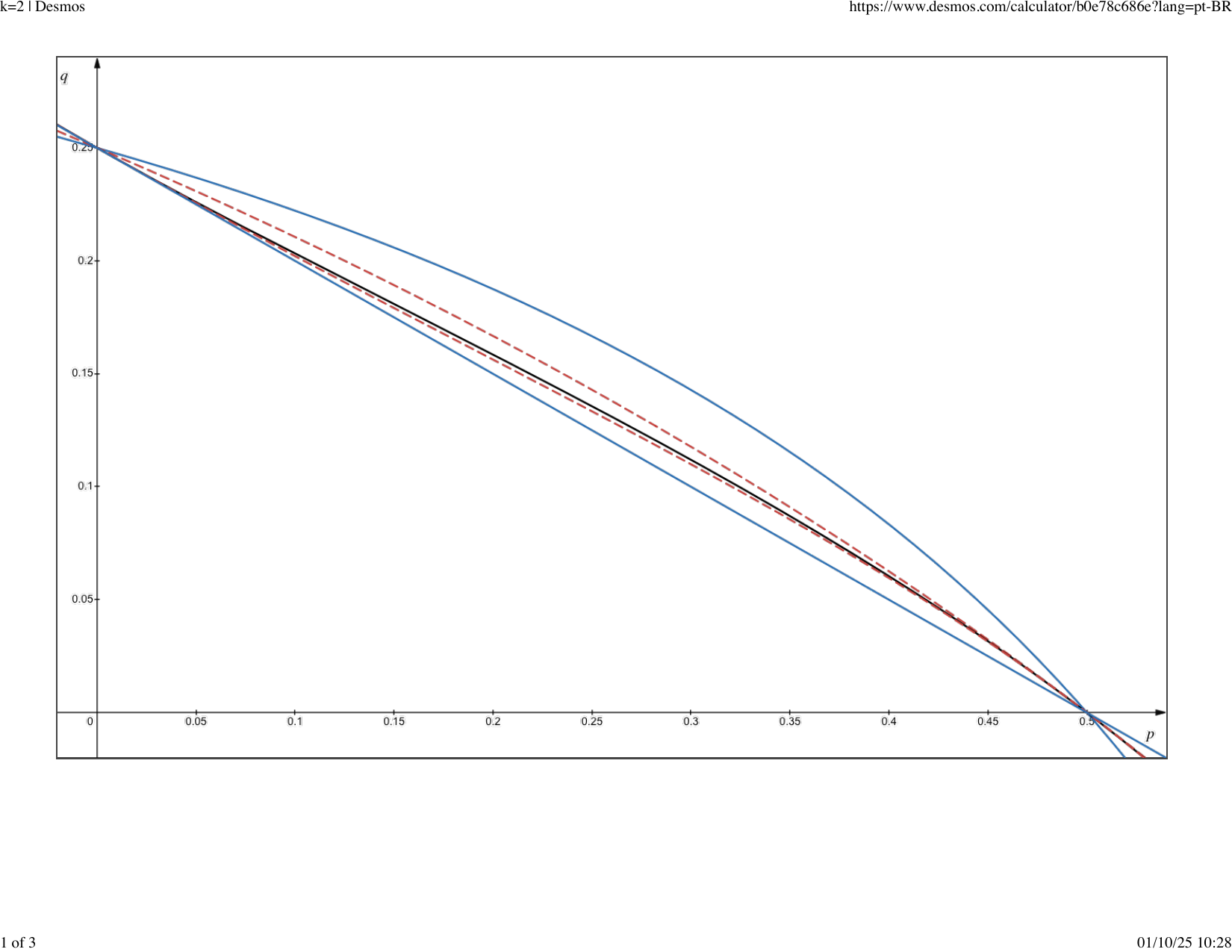}
\caption{In black the critical curve (the one of the middle) for $m=2$ with $l=1,k=2$. In blue the bounds of Proposition~\ref{prop:easybounds} and in red (dashed) the bounds of Theorem~\ref{thm:bounds}. }
\end{center}
\end{figure}

\paragraph{The two-edge model.}

We obtain further results when restricting to the case $m=2$.
Let us start with a simple observation.

\begin{proposition}
\label{prop:easybounds}
For every $p\in[0,1/d^l]$,
\[
\frac{1-pd^l}{d^k}
\le
q_c(p)
\le
\frac{1-pd^l}{(1-p)d^k}
.
\]
\end{proposition}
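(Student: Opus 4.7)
The plan is to prove the two inequalities via standard branching-process comparisons: the lower bound by a first-moment argument, and the upper bound by coupling $\mathcal C(o)$ from below with a Galton--Watson process.

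For the lower bound, I would let $X_n$ denote the number of open paths from $o$ whose endpoints lie at the $n$-th generation of the tree. An open path using $a$ edges of length $l$ and $b$ edges of length $k$ has total length $n=al+bk$; among all such paths, the $a+b$ edges can be ordered in $\binom{a+b}{a}$ ways, each ordering admits $d^{al+bk}=d^n$ distinct realisations in the tree, and each such path is open with probability $p^a q^b$. Summing,
\[
\sum_{n\ge 0}\e[X_n] \;=\; \sum_{r\ge 0}\bigl(pd^l+qd^k\bigr)^r,
\]
which is finite whenever $pd^l+qd^k<1$. Markov's inequality gives $\p(X_n\ge 1)\le \e[X_n]$, so the Borel--Cantelli lemma yields $\p(X_n\ge 1\text{ i.o.})=0$; since the event $\{|\mathcal C(o)|=\infty\}$ forces open paths of arbitrarily large length, it is contained in the latter event, so $\theta(p,q)=0$ in this regime. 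This proves $q_c(p)\ge(1-pd^l)/d^k$.

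For the upper bound, I would construct a Galton--Watson process embedded in $\mathcal C(o)$. Declare each vertex $v$ of the branching process to have as offspring: (i) every level-$l$ descendant $u$ of $v$ for which the length-$l$ edge $v\to u$ is open, together with (ii) every level-$k$ descendant $w$ of $v$ for which the length-$k$ edge $v\to w$ is open \emph{and} the length-$l$ edge from $v$ to the level-$l$ tree-ancestor of $w$ below $v$ is closed. Iterating this recipe recursively from $o$ produces a tree whose vertices all belong to $\mathcal C(o)$, with i.i.d.\ offspring distributions of mean
\[
\mu \;=\; pd^l+(1-p)qd^k.
\]
When $\mu>1$, this process survives with positive probability, so $\theta(p,q)>0$, giving the upper bound $q_c(p)\le(1-pd^l)/((1-p)d^k)$.

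The delicate point in the upper bound is verifying the Galton--Watson structure, i.e., that no tree-vertex is claimed as offspring of two distinct branching-process individuals and that the offspring sets attached to different individuals are functions of disjoint families of edges (hence independent). Clause (ii) is engineered for precisely this: it excludes any level-$k$ descendant $w$ whose level-$l$ ancestor $u$ is already a type-(i) offspring of $v$ (so $v\to u$ is open), with the effect that $w$ can only enter the branching process, if at all, as a type-(i) offspring of $u$ at the next generation. Once bookkeeping shows that the descendant subtrees attached to distinct branching-process individuals are disjoint, the offspring counts are independent, and the classical extinction dichotomy closes the argument.
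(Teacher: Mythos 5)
Your proposal is correct and takes essentially the same approach as the paper: the lower bound is the same first-moment argument (the paper phrases it as embedding $\mathcal C(o)$ in a Galton--Watson tree with mean offspring $pd^l+qd^k$), and your upper-bound construction is exactly the paper's ``reduced model,'' in which a long edge from $v$ is usable only when the corresponding short edge from $v$ is closed, whose cluster is a Galton--Watson tree with mean $pd^l+(1-p)qd^k$. The bookkeeping you defer is real but short: in the reduced model the first jump out of any vertex is of length $l$ precisely when the relevant short edge is open and of length $k$ otherwise, so each cluster vertex has a unique parent and the offspring sets are determined by disjoint edge families.
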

The lower bound is immediate by noticing that the percolation model can be embedded in a Galton-Watson tree with offspring expectation $pd^l+qd^k$.
For the upper bound, we consider a reduced model, whereby long edges originating at each given vertex can be open only when the corresponding short edges are closed.
On the one hand, this new model is dominated by the original model and, on the other hand, $\mathcal C(o)$ is in perfect correspondence with a Galton-Watson tree having offspring expectation $pd^l+(1-p)qd^k$.

The following theorem gives tighter bounds for the critical parameters.

\begin{theorem}
\label{thm:bounds}
Fix any $d\ge2$ and $1\le l<k<\infty$.
For every $p\in[0,1/d^l]$,
\begin{align*}
\frac{1-pd^l}{d^k(1-p^{k-l+1})}\le q_c(p)& \le \frac{1-pd^{l}}{d^{k}-pd^{l}}
.
\end{align*}
\end{theorem}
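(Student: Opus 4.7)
I would prove the two bounds by different techniques: a direct Galton--Watson (GW) domination for the upper bound, and a sign analysis of the characteristic polynomial from Theorem~\ref{thm:multi-edges} for the lower bound.

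\emph{Upper bound.} Since $d^l \le d^k$, at each vertex $u$ fix an injection pairing each of the $d^l$ short-edge neighbors of $u$ with a distinct long-edge neighbor of $u$. Consider a reduced model in which a short edge is declared open iff it is open in the original model \emph{and} its paired long edge is closed, while long edges are open iff they are open in the original. The reduced cluster is contained in $\mathcal{C}(o)$. Conditional on the edges incident to $u$, the children of $u$ in the reduced model comprise $d^l$ short-edge children (each with marginal probability $p(1-q)$) and $d^k$ long-edge children (each with probability $q$); paired short/long indicators are mutually exclusive, but subtrees rooted at different children of $u$ use disjoint sets of edges and hence remain independent. The reduced cluster is therefore a GW tree with offspring mean $\mu(p,q) = pd^l(1-q) + qd^k$. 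By standard GW theory this tree survives with positive probability whenever $\mu(p,q) > 1$, i.e., whenever $q > \frac{1-pd^l}{d^k - pd^l}$, giving $\theta(p,q) > 0$ and hence $q_c(p) \le \frac{1-pd^l}{d^k - pd^l}$.

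\emph{Lower bound.} By monotonicity of $\theta$ in $q$ together with $\theta(p,0) = 0$ for $p < 1/d^l$ and Theorem~\ref{thm:multi-edges}, $q_c(p)$ is a root of the polynomial $f(q) := P_Q(1/d)$ (viewed as a polynomial in $q$ with $p, d, l, k$ fixed). Setting $Q_L := \frac{1-pd^l}{d^k(1-p^{k-l+1})}$, I would show that $f(Q_L) > 0$ with the same sign as $f(0)$, and that $f$ does not change sign on $[0, Q_L]$; together with the monotonicity of $\theta$, this forces $q_c(p) \ge Q_L$.

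The main obstacle is the algebraic simplification of $f(Q_L)$ into a manifestly positive form. In the simplest case $l=1$, $k=2$ a direct substitution yields, after collecting terms,
\[
f(Q_L) \;=\; \frac{p^{2}\,(1-pd)^{2}}{d^{4}(1-p)(1+p)^{2}},
\]
which is strictly positive on $p \in (0, 1/d)$; the $p^{k-l+1} = p^{2}$ factor arising in the numerator mirrors the $1-p^{k-l+1}$ appearing in the denominator of $Q_L$, which is what makes $Q_L$ the natural candidate. For general $1 \le l < k$ one would exploit the shift-structure of the Markov chain underlying $Q$ to obtain an analogous factorization $f(Q_L) \propto p^{k-l+1}(1-pd^l)^{2}$ times a strictly positive cofactor, and an analogous computation at $q = \frac{1-pd^l}{d^k - pd^l}$ yields a negative expression (recovering the upper bound). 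Carrying out and verifying these factorizations in the general case is the principal computational challenge.
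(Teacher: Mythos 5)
Your two arguments take different routes from the paper, which proves both bounds uniformly via the recursion $u_i=pu_{i-l}+qu_{i-k}-\p(Y_{i-l}=Y_{i-k}=1)\,pq$, bounding the joint term by $u_{i-l}$ from above and by $p^{k-l}u_{i-k}$ from below, and then comparing the resulting linear recurrences' Perron roots to $1/d$. Unfortunately, both of your routes have genuine problems.

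\emph{Upper bound.} The claim that the reduced cluster is a Galton--Watson tree is false. Your pairing $\pi$ only tethers each short-edge child $v$ (at depth $l$ below $u$) to a single long-edge child $\pi(v)$. It does not prevent the event that the short edge $u\to v$ is reduced-open \emph{and} some long edge $u\to w$ with $w$ a descendant of $v$ (and $w\neq\pi(v)$) is simultaneously reduced-open; these two events are independent and occur together with probability $p(1-q)q>0$. In that case $\V^{\ge w}\subseteq\V^{\ge v}$, so the subtrees rooted at two ``children'' overlap, contradicting the ``disjoint sets of edges'' claim and breaking the GW structure. This is precisely why the paper's Proposition~\ref{prop:easybounds} reduced model declares a long edge open only when the short edge to its depth-$l$ ancestor is closed: that suppresses \emph{all} $d^{k-l}$ conflicting long edges at once and yields genuine tree structure, but at the cost of the weaker mean $pd^l+(1-p)qd^k$. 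Your mean $pd^l(1-q)+qd^k$ cannot be obtained this way without overcounting. The paper obtains the sharper factor $d^k-pd^l$ precisely by abandoning the GW picture and working with the one-dimensional connectivity $u_i$ along a fixed ray, where overlaps are irrelevant.

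\emph{Lower bound.} Conceptually this is a different route than the paper's (sign of the characteristic polynomial rather than a sub-solution of the recursion), but it is not carried out, and the part you did carry out has a sign error: with $f(q)=P_Q(1/d)$ as written in the paper, one finds $f(q)=-(1-p)q^2+(1-p)\frac{1+d}{d^2}q-\frac{1-pd}{d^3}$, so $f(0)=-\frac{1-pd}{d^3}<0$ and
\[
f(Q_L)=-\frac{p^{2}(1-pd)^{2}}{d^{4}(1-p)(1+p)^{2}}<0,
\]
not the positive quantity you state. More importantly, matching signs at the endpoints $0$ and $Q_L$ does not by itself rule out a pair of roots of $f$ inside $(0,Q_L)$, and Theorem~\ref{thm:multi-edges} only places the critical curve \emph{inside} the zero set of $P_Q(1/d)$; it does not say that the sign of $P_Q(1/d)$ detects percolation. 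To make the argument go through you would have to identify the relevant root of $f$ with $\rho_Q$ via the Perron--Frobenius structure (as the paper does via Lemmas~\ref{lemma:utorho} and~\ref{lemma:percu}) and then argue sign-definiteness of $f$ on the whole interval, which you acknowledge but do not supply. The paper avoids all of this with a one-line bound $\p(Y_{i-l}=1\mid Y_{i-k}=1)\ge p^{k-l}$, leading to the linear recurrence $u_i\le pu_{i-l}+q(1-p^{k-l+1})u_{i-k}$ whose characteristic polynomial is explicit and whose sign at $1/d$ directly translates into $\rho_Q\lessgtr 1/d$.
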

 
The proof of Theorem~\ref{thm:bounds} is given in Section~\ref{sec:double2}. 

Two interesting consequences of this theorem are the following. First, the lower bound implies that the critical curve lies strictly above the straight line that connects $(d^{-l},0)$ to $(0,d^{-k})$, for every $1\le l<k<\infty$ (as remarked in \cite{de2023multirange} when $l=1$). Second, the bounding curves are differentiable, and when $l=1$ they have equal slope $\frac{-d}{d^k-1}$ at $p=d^{-1}$, implying that $p\mapsto q_c(p)$ is also differentiable at $p=d^{-1}$, and has the same slope.

\begin{figure}\label{figure:k3}
\begin{center}
\includegraphics[scale=0.35]{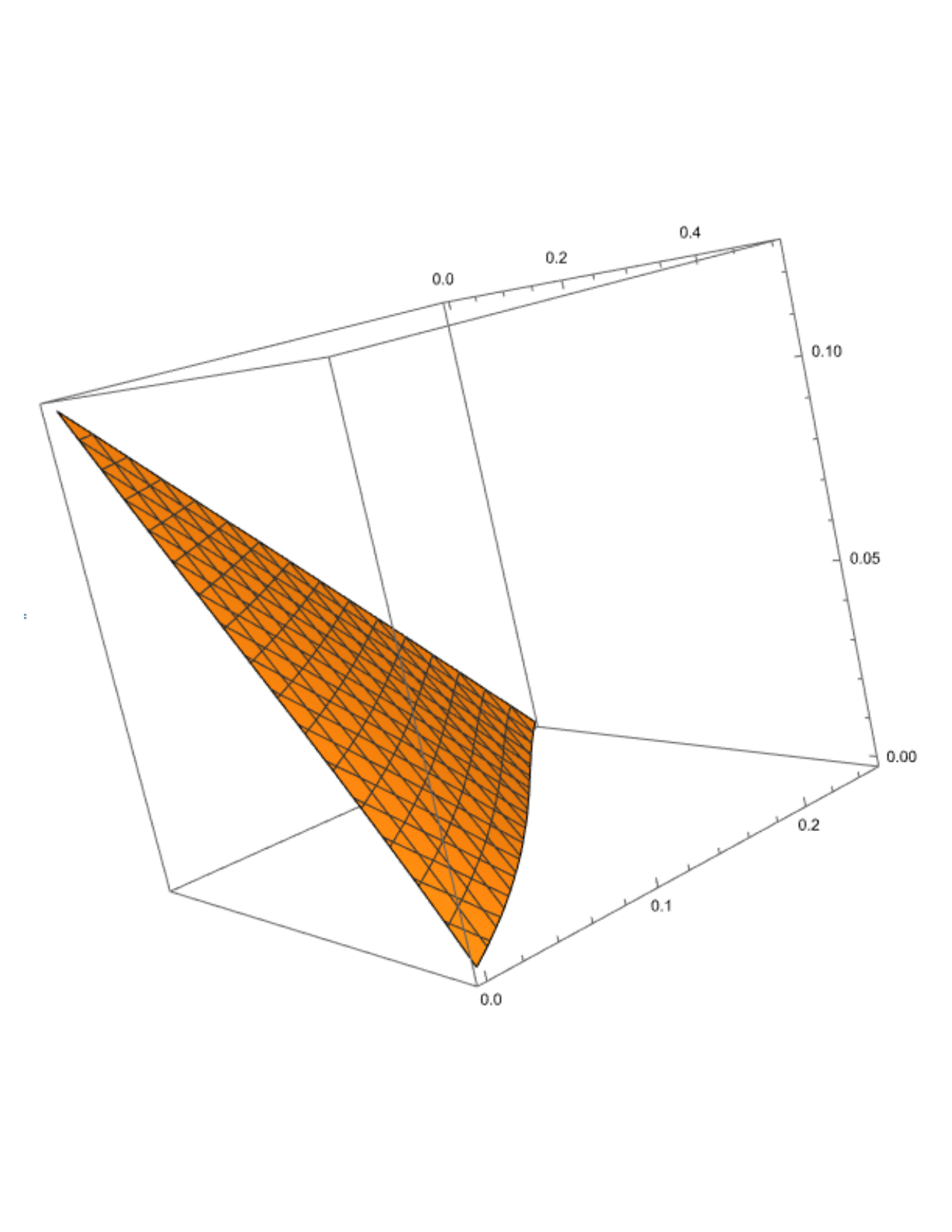}
\caption{The critical surface for $m=3$ with $k_1=1,k_2=2,k_3=3$. }
\label{fig:3d}
\end{center}
\end{figure}

Our last theorem states that the critical curve is continuous and strictly decreasing.

\begin{theorem}
\label{thm:curvemonot}
Fix any $d\ge2$ and $1\le l<k<\infty$.
If $ q' > q'' > 0 $ are such that $ p_c(q'') > 0 $, then $ p_c(q') < p_c(q'') $.
If $ p' > p'' > 0 $ are such that $ q_c(p'') > 0 $, then $ q_c(p') < q_c(p'') $. 
\end{theorem}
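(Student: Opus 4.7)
My approach relies on the spectral interpretation established in the proof of Theorem~\ref{thm:multi-edges}. That proof (Section~\ref{sec:markov}) realises the long-range percolation as the survival of a multi-type Galton--Watson process whose mean offspring matrix is $d\cdot Q(p,q)$, so that percolation occurs if and only if the Perron--Frobenius eigenvalue $\rho(p,q)$ of the (sub-stochastic) matrix $Q$ on $\{0,1\}^{k_m}\setminus\{0\}^{k_m}$ strictly exceeds $1/d$. Hence $\cS_c=\{(p,q):\rho(p,q)=1/d\}$, and Theorem~\ref{thm:curvemonot} reduces to the claim that $\rho$ is strictly increasing in each argument separately on the relevant rectangle. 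I focus on the $q$-direction; the $p$-direction follows the same template.

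For $p,q\in(0,1)$ the chain with kernel $Q$ restricted to $\{0,1\}^{k_m}\setminus\{0\}^{k_m}$ is irreducible and aperiodic (direct from the shift-and-append dynamics of~\eqref{def:matrixQ}), so Perron--Frobenius yields a simple dominant eigenvalue $\rho$ with strictly positive right and left eigenvectors $u$ and $v$. Standard first-order perturbation gives $\partial_q\rho=\langle v,(\partial_q Q)u\rangle/\langle v,u\rangle$. Inspecting~\eqref{def:matrixQ}, $\partial_q Q$ vanishes on rows with $x_1=0$, and on each row with $x_1=1$ transfers mass $(1-p)^{x_{k-l+1}}$ from the successor $\by_0(\bx)=(x_2,\dots,x_k,0)$ to $\by_1(\bx)=(x_2,\dots,x_k,1)$, so
\[
\frac{\partial\rho}{\partial q}\,\langle v,u\rangle=\sum_{\bx:\,x_1=1}v_\bx\,(1-p)^{x_{k-l+1}}\bigl(u_{\by_1(\bx)}-u_{\by_0(\bx)}\bigr),
\]
where by convention $u_\bz:=0$. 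Strict positivity therefore reduces to strict monotonicity of the Perron eigenvector $u$ along every $0\to1$ flip of the last coordinate.

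This last step follows from stochastic monotonicity of $Q$: the product form of~\eqref{def:matrixQ} provides a coordinatewise monotone coupling of one step of $Q$ from any pair $\bx\le\by$, so $Q$ sends coordinatewise increasing functions to increasing functions. Iterating, $u=\lim_n \rho^{-n}Q^n\bo$ is coordinatewise increasing, and irreducibility (more concretely, after $k-1$ shifts a $0\to1$ flip at the last coordinate is carried to the first coordinate, where it changes the one-step transition probability to states with $y_{k_m}=1$ by a strictly positive amount) upgrades this to strict monotonicity. Combining everything, $\partial_q\rho>0$ throughout the rectangle, so by the implicit function theorem $p_c(q)$ is strictly decreasing wherever positive. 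The main technical obstacles I expect are (i) upgrading the containment $\cS_c\subseteq\{P_Q(1/d)=0\}$ of Theorem~\ref{thm:multi-edges} to the equality $\cS_c=\{\rho=1/d\}$, which requires extracting from the branching-process argument the full equivalence \emph{percolation} $\Leftrightarrow$ $\rho>1/d$, and (ii) carefully handling the absorbing state $\bz$ in the Perron--Frobenius analysis of the restricted chain.
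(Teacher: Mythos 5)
Your proof takes a genuinely different route from the paper's. The paper argues by contradiction using polynomial rigidity: if $p_c(q')=p_c(q'')=p_c$, then the (non-strict) monotonicity of $\rho_Q$ in $q$ forces $\rho_Q(p_c,q)=1/d$ for all $q\in[q'',q']$, hence $P_Q(1/d;p_c,q)=0$ on an interval and therefore identically in $q$; but then $1/d$ would be an eigenvalue of $Q(p_c,0)$, contradicting $\rho_Q(p_c,0)=p_c^{1/l}<1/d$ from Lemmas~\ref{lemma:pczero} and~\ref{lemma:sillybound}. That argument sidesteps perturbation theory and needs no information about the Perron eigenvector. Your approach — proving $\partial_q\rho_Q>0$ directly via the Perron--Frobenius perturbation formula — is sound and in fact yields more (e.g.\ smoothness of $q\mapsto p_c(q)$ where $p_c>0$), but it demands more input.

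Two points need attention. First, you omit the reduction to $\gcd\{l,k\}=1$ (otherwise replace $(l,k,d)$ by $(l/g,k/g,d^g)$ with $g=\gcd\{l,k\}$); without it $Q$ need not be primitive, $\rho_Q$ need not be a simple eigenvalue, and the perturbation formula $\partial_q\rho_Q=\langle v,(\partial_q Q)u\rangle/\langle v,u\rangle$ is not available. The paper invokes this reduction explicitly. Second, and more substantively, the step that the Perron eigenvector $u$ is \emph{strictly} increasing along a last-coordinate $0\to1$ flip is where the real work sits, and your parenthetical justification is not yet a proof: after $k-1$ coupled steps the two chains may have coalesced on all newly generated coordinates, so a strictly larger one-step transition probability does not by itself propagate to a strict inequality in $u$. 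A correct argument: extend $u$ by $u(\bz):=0$ so that $\mathcal Q u=\rho_Q u$ on all of $\{0,1\}^{k}$, hence $\mathcal Q^{k-1}u=\rho_Q^{k-1}u$; run the monotone coupling from $\by_1=(y_2,\dots,y_k,1)$ and $\by_0=(y_2,\dots,y_k,0)$ for $k-1$ steps; with positive probability every bit appended to the lower chain is $0$, so $X_{k-1}^{(0)}=\bz$ while $X_{k-1}^{(1)}$ has first coordinate $1$ and $u(X_{k-1}^{(1)})>0$; therefore $\rho_Q^{k-1}\bigl(u(\by_1)-u(\by_0)\bigr)=\e\bigl[u(X_{k-1}^{(1)})-u(X_{k-1}^{(0)})\bigr]>0$. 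Finally, your listed obstacle (i) is not actually one: the containment of Lemma~\ref{cor:rhocrit}, combined with the implication $\rho_Q>1/d\Rightarrow\theta>0$ of Lemma~\ref{lemma:percu} and your strict monotonicity of $\rho_Q$ in $q$, already gives $\rho_Q(p_c(q''),q')>1/d$ and hence $p_c(q')<p_c(q'')$; the full equality $\cS_c=\{\rho_Q=1/d\}$ is never needed.
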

The proof of Theorem~\ref{thm:curvemonot} is given in Section~\ref{sec:monot}.

The first handmade sketches of the critical curve show it as a concave function. 
It turns out that this perhaps intuitive depiction is misleading.
The function $q \mapsto p_c(q)$ given by~\eqref{eq:pcexplicit} is convex near $q=\frac{1}{4}$; equivalently, $p \mapsto q_c(p)$ is convex near $p=0$, as can be seen on Figure~\ref{figure:k2}.
This seems to be the case for $l=1$ and every $k \geq 2$.

\begin{conjecture}
\label{conj:convexity}
For the case $m=2$ and $l=1$, there exists $\delta=\delta(k,d)>0$ such that $p \mapsto q_c(p)$ is not concave on $[0,\delta]$.
\end{conjecture}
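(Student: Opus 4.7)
The plan is to prove the stronger statement $q_c''(0^{+})>0$; by continuity of $q_c''$ on a right neighbourhood of $0$, this forces strict convexity, and hence non-concavity, of $q_c$ on some interval $[0,\delta]$.

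\textbf{Step one (implicit setup and structure at $p=0$).} By Theorem~\ref{thm:multi-edges} the critical curve is locally the zero set of the polynomial $F(p,q):=P_Q(1/d)$. The matrix $Q|_{p=0}$ has a transparent combinatorial structure: ordering $\{0,1\}^k\setminus\{0^k\}$ by Hamming weight, $Q|_{p=0}$ is block lower triangular (Hamming weight cannot increase along a single step), and each weight-$m$ diagonal block further splits into cyclic-shift orbits on each of which $Q|_{p=0}$ acts as a weighted cyclic permutation. A short computation then gives the factorisation
$$
P_Q(\lambda)\big|_{p=0}\;=\;\prod_{O}\bigl(\lambda^{|O|}-q^{m_O|O|/k}\bigr),
$$
the product running over cyclic-shift orbits $O$ of weight $m_O$ and length $|O|$. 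Only the unique weight-one orbit (length $k$) contributes a factor $\lambda^k-q$ that vanishes at $(\lambda,q)=(1/d,1/d^k)$, which confirms $q_c(0)=1/d^k$; every other factor is strictly positive there, so $1/d$ is a simple eigenvalue of $Q|_{p=0,q=1/d^k}$ and $F_q(0,1/d^k)\neq 0$. The implicit function theorem then yields $q_c$ analytic near $0$ and the identity
$$
q_c''(0)\;=\;-\frac{F_{pp}F_{q}^{2}-2F_{pq}F_{p}F_{q}+F_{qq}F_{p}^{2}}{F_{q}^{3}}\bigg|_{(0,\,1/d^k)}.
$$

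\textbf{Step two (partial derivatives).} Since $1/d$ is simple, $\mathrm{adj}(Q-(1/d)I)$ is rank one at the base point, and Jacobi's formula collapses to $F_{p}=\alpha\,\ell^{T}(\partial_{p}Q)\,v$; standard second-order perturbation theory provides analogous expressions for $F_{pp},F_{pq},F_{qq}$. For $l=1$, $Q(\bx,\by)$ depends on $\bx$ only through $(x_1,x_k)$, so $\partial_p Q|_{p=0}$ is a sparse, low-rank perturbation whose nonzero entries are $\pm(1-q)^{x_1}$, supported on transitions out of states with $x_k=1$; the right and left eigenvectors $v,\ell$ of $Q|_{p=0,q=1/d^k}$ at $\lambda=1/d$ have explicit weight-one components computed from the weighted cyclic shift, with higher-weight components recovered recursively by inverting $(Q_{m,m}-(1/d)I)$ for $m\ge 2$. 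The base case $k=2$ is already handled by~\eqref{eq:pcexplicit}, which gives $q_c''(0)=2/[d(d-1)]>0$.

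\textbf{Step three (main obstacle: positivity).} The remaining task is to show that $q_c''(0)>0$ for every $k\ge 3$ and $d\ge 2$. Since the dimension of $Q$ grows as $2^k-1$, a brute-force computation is not viable and one needs a structural argument. My hope is that a probabilistic interpretation will render the sign transparent: the first-order term $-F_p/F_q$ should agree to leading order in $d$ with the Galton--Watson slope from Proposition~\ref{prop:easybounds}, while the second-order correction should reflect a strictly positive ``competition'' contribution from pairs of short-edge events at the root that try to cover overlapping descendant subtrees --- a phenomenon that a disjoint-occurrence (BK-type) inequality should make rigorous. If a fully conceptual proof proves elusive, a fallback is to establish $q_c''(0)>0$ asymptotically as $d\to\infty$ via dominant balance in the characteristic polynomial, supplemented by direct verification for small $k$ using the orbit factorisation of Step one.
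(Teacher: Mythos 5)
Your Step one is correct and, incidentally, a nice observation that the paper does not make explicit: at $p=0$ the matrix $Q$ is block lower triangular in Hamming weight, the diagonal blocks are weighted cyclic shifts on necklace orbits, and the characteristic polynomial factors as $\prod_O(\lambda^{|O|}-q^{m_O|O|/k})$, with only the weight-one orbit vanishing at $(\lambda,q)=(1/d,1/d^k)$. So simplicity of the eigenvalue and $F_q\neq 0$ are established, the implicit function theorem applies, and your $k=2$ value $q_c''(0)=\tfrac{2}{d(d-1)}$ indeed matches~\eqref{eq:pcexplicit}. But Step three is exactly where the argument stops being a proof: you do not establish $q_c''(0)>0$ for $k\ge 3$, and you say so yourself --- the BK-type ``competition'' heuristic and the $d\to\infty$ dominant-balance fallback are plans, not arguments. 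As it stands this is a proposal, not a proof of the conjecture.

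For comparison, the paper does not unconditionally prove the conjecture either; Section~\ref{sec:conjecture} is an explicit \emph{justification under an assumption}. The route is quite different from yours. The paper isolates $r_{p,q}=\lim_i\p(Y_{i-k}=1\,|\,Y_{i-1}=1)$ via the Yaglom limit of the absorbing chain $(X_j)$, shows that the exact recursion~\eqref{eq:rec} then gives $q_c=\frac{1-pd}{d^k-p\,r_{p,q_c}\,d}$, and under the ansatz $r_{p,q_c(p)}=O(p)$ concludes $q_c'(0)=-d^{1-k}$. This is combined with the strict lower bound of Theorem~\ref{thm:bounds} (the critical curve lies strictly above the line of slope $-d^{1-k}$ through $(0,d^{-k})$): a concave function that touches and then stays strictly above its own tangent line at $0$ is impossible, hence non-concavity near $0$. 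The tradeoff is instructive. The paper only needs a \emph{first-order} statement about $q_c$ at $p=0$, which is why it can reduce the conjecture to a single, plausible probabilistic estimate; the tangent line is supplied for free by Theorem~\ref{thm:bounds}. Your approach aims at the \emph{second-order} quantity $q_c''(0)$ directly, which would be a stronger conclusion and would not need the paper's assumption, but correspondingly the hard part (a sign for a second-order perturbation of a $(2^k-1)\times(2^k-1)$ characteristic polynomial) is genuinely harder and is left open. If you want to pursue your route, it would be natural to first recover $q_c'(0)=-d^{1-k}$ from $-F_p/F_q$ as a sanity check against the paper, and then try to show $F_{pp}F_q^2-2F_{pq}F_pF_q+F_{qq}F_p^2$ has the right sign using the rank-one adjugate structure you identified; alternatively, observe that the paper's Theorem~\ref{thm:bounds} already gives the tangent line, so proving just $q_c'(0)=-d^{1-k}$ unconditionally (a first-order, hence simpler, perturbation computation in your framework) would already yield the conjecture without needing the second derivative at all.
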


We provide a justification for this conjecture in Section~\ref{sec:conjecture}, based on an assumption that we also conjecture to be true. 

\paragraph{Relation with previous works.} \cite{de2019monotonicity} considered the special case in which $m=2$ and $l=1$.
They proved that $q_c$ is continuous and strictly decreasing in $p$, and also strictly decreasing in $k$. Later on, \cite{de2023multirange} obtained the first two terms of the asymptotic expansion of $q_c$ as $k$ diverges, and in particular that the critical curve lies strictly above the line $q=\frac{1-pd}{d^k}$.

Let us first observe that, even in the broader context where $1 \le l < k < \infty$, the fact that $q_c=q_c(p,d,l,k)$ is strictly decreasing in $k$ follows from Proposition~\ref{prop:easybounds} above.

Now let us position our results referring to the critical surface.
Theorems~\ref{thm:bounds} and~\ref{thm:curvemonot} hold for every $l$ and $k$, without assuming that $l=1$.
Theorem~\ref{thm:bounds} implies that the critical curve lies strictly above the line connecting $(0,d^{-k})$ to $(d^{-l},0)$, and when $l=1$, that it is differentiable at $p=d^{-1}$ with slope $\frac{-d}{d^k-1}$, steeper than said line.

Let us also mention that our method of proof for Theorem~\ref{thm:multi-edges} does not involve percolation arguments as such used by~\cite{de2019monotonicity} and~\cite{de2023multirange}. Much more in the spirit of \cite{gallo2018frog} and \cite{gallo2023critical}, we transform the problem into that of studying the decay of the probability of finding long open paths along one fixed direction, and, in a second step, this probability is related to the absorption time of a Markov chain.
An alternative approach is to consider a multi-type branching process whose survival is equivalent to percolation in the long-range percolation model, which leads to the same polynomial equation for the critical surface.

The remaining of the paper is dedicated to proving the results (Section \ref{sec:proofs}) and justifying the conjecture (Section \ref{sec:conjecture}).

\section{Proofs of the results}\label{sec:proofs}

We start by defining a multi-step Markov chain whose asymptotic absorption rate is related to connectivity decay of the multi-range percolation model along a fixed branch.
We then use it to prove Theorem~\ref{thm:multi-edges}.
Finally, Theorems~\ref{thm:bounds} and~\ref{thm:curvemonot} are proved in separate subsections using the same construction.

\subsection{Identification of the critical surface}
\label{sec:markov}

Here we prove Theorem~\ref{thm:multi-edges} by analysing a multi-step Markov chain that will be used in later subsections.

Let $\V$ denote the set of vertices of the rooted oriented $d$-ary tree.
For $v,w\in\V$, we write $w \ge v$ if either $v= w$ or the oriented path from the root $o$ to $w$ passes by $v$, and in this case denote by $d(v,w)$ the length of the oriented path from $v$ to $w$.
Let $\V^{(n)}:=\{v\in\V:d(o,v)=n\}$, $\V^{\ge w}:=\{v\in\V:v\ge w\}$.
Let $\V^{\swarrow}$ consist of a fixed sequence of sites that form an infinite oriented path starting from the origin, and $\V^{\swarrow n}:=\{v\in\V^{\swarrow}:d(o,v)\ge n\}$.
(Here we consider the edges of the original tree rather than the long-range ones of our percolation model.)

Denote by $u_n=u_n(\bp)$ the quantity
\[
u_n(\bp) :=
\p( \C(o) \cap \V^{\swarrow n} \ne \emptyset)
.
\]
%for an arbitrary choice of $w \in \V^{(n)}$, the choice of $w$ being irrelevant by symmetries of the model.
Observe that $u_n(\bp)$ is non-increasing in $n$ and non-decreasing in $\bp$.
We will study the asymptotic behaviour of $(u_n)_n$ through the following construction.

Fix $n\ge1$ for now.
%Take $w\in\V^{(n)} \cap \V^\swarrow$.
We enumerate the first $n$ vertices in $\V^{\swarrow}$ backwards as $v_{n},\dots,v_1,v_0$, so that $v_n=o$, $v_0 \in \V^{(n)} \cap \V^\swarrow$ and $d(v_i,v_0)=i$ for $i=0,\dots,n$.
%of the path $[ow]$ in direction from the vertex $w$ to the root $o$, that is, $v_i$ is the vertex in $[ow]$ such that $d(v_i,w)=i$ (so in particular $v_0=w$ and $v_n=o$).
We now define a sequence of binary random variables 
\begin{equation}\label{eq:Y}
Y^{n}_{i}=\1\{v_{i} \rightarrow \V^{\swarrow n} \}, \ i=1,\dots,n,
\end{equation}
so that in particular
\[
\{Y^{n}_{n}=1\}=\{ \C(o) \cap \V^{\swarrow n} \ne \emptyset \}
\]
and therefore
\begin{equation}\label{eq:Yeu}
u_n=\p(Y^{n}_n=1).
\end{equation}

%Let $\nu$ denote the distribution of $Y^{w}_{i},i=0,\dots,k_m-1$.
Notice that $(Y^{n}_{i})_{i=1,\dots,n}$ can be constructed as a $(k_m-1)$-step Markov chain started from $Y_{-k_m+1}=\dots=Y_0=1$ with transition probabilities 
\begin{equation}\label{eq:transitionY}
\p(Y_i=0|Y_{i-1}=y_{i-1},\dots,Y_{i-k_m}=y_{i-k_m})=\prod_{j=1,\dots,m}(1-p_j)^{y_{i-k_j}}
.
\end{equation}
These transition probabilities do not depend on $n$, so we removed $n$ from the superscript and we are no longer taking $n$ fixed.

This $(k_m-1)$-step Markov chain can be seen as a proper Markov chain $(X_j)_{j\ge 0}$ on $\{0,1\}^{k_m}$, by putting $X_j=(Y_{j-k_m+1},\dots,Y_{j})$ for $j=0,1,2,\dots$.
This chain starts from $X_{0} = \bo$ and has transition matrix given as follows: for $\bx,\by \in \{0,1\}^{k_m}$ 
\begin{equation}\label{eq:transitionmatrixQ}
\mathcal Q(\bx,\by)=
\left\{\begin{array}{ccc}
\prod_{j=1}^m (1-p_j)^{x_{k_m-k_j+1}} &\text{if}&\,y_{k_m}=0\,,\,\,y_i=x_{i+1},\,i=1,\dots,k_m-1,
\\
1-\prod_{j=1}^m (1-p_j)^{x_{k_m-k_j+1}} &\text{if}&\,y_{k_m}=1\,,\,\,y_i=x_{i+1},\,i=1,\dots,k_m-1.
\end{array}\right.
\end{equation} 
Observe that $\mathcal Q(\bz,\bz)=1$, so $\bz$ is an absorbing state. We therefore consider the matrix $Q$, sub-matrix of $\mathcal Q$ restricted to $\{0,1\}^{k_m}\setminus \{\bz\}$, which is precisely the one defined in~\eqref{def:matrixQ}.
Now let
\[
\rho_Q = \rho_Q(\bp) = \max\{|\lambda|:\lambda \in \mathbb{C} \text{ is an eigenvalue of } Q \}
.
\]
Since the entries of $Q$ vary continuously as functions of $\bp$, and $\rho_Q$ varies continuously with respect to the entries of the matrix, we automatically have that \emph{$\rho_Q$ varies continuously with $\bp$}.

The next lemma relates $\rho_Q$ to the asymptotic behavior of $u_n$.

\begin{lemma}
\label{lemma:utorho}
The $\lim u_n^{1/n}$ exists and equals $\rho_Q$.
\end{lemma}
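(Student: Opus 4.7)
The approach is to express $u_n$ as a matrix-power quantity and match its exponential rate with $\rho_Q$ by means of Perron--Frobenius theory for non-negative matrices. From the Markov chain construction, with $X_0 = \bo$ and $Q$ the restriction of $\mathcal Q$ to $S := \{0,1\}^{k_m}\setminus\{\bz\}$, we have $u_n = \sum_{\bx \in S,\, x_{k_m}=1} Q^n(\bo,\bx)$. It is convenient also to consider the non-absorption probability $a_n := \p(X_n \ne \bz) = \sum_{\bx \in S} Q^n(\bo,\bx)$. Since $\{X_n\ne\bz\} = \bigcup_{t=0}^{k_m-1}\{Y_{n-t}=1\}$ and $u_n$ is non-increasing, a union bound gives $u_n \le a_n \le k_m\,u_{n-k_m+1}$, so $u_n$ and $a_n$ share the same $n$-th root limit. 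The upper bound $\limsup_n a_n^{1/n} \le \rho_Q$ then follows immediately from $a_n \le \|Q^n\|$ and Gelfand's formula $\lim_n \|Q^n\|^{1/n} = \rho_Q$.

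For the matching lower bound, the key structural observation is that from $X_0 = \bo$, at any time $k < k_m$ the state has the form $(1,\ldots,1,b_1,\ldots,b_k)$ with $k_m-k$ leading $1$'s, so its coordinate $1$ is always $1$. This coordinate is the relevant position $k_m - k_m + 1$ for $j = m$ in \eqref{def:matrixQ}, and since all $p_j \in (0,1)$ the probability of appending either bit strictly lies in $(0,1)$. Hence the chain can be steered freely, and $\bo$ reaches every $\by \in S$ in exactly $k_m$ steps with probability at least some $\alpha > 0$. By the Perron--Frobenius theorem for reducible non-negative matrices, $\rho_Q = \max_C \rho(Q|_C)$ over the strongly connected components $C$ of the transition graph $\{(\bx,\by):Q(\bx,\by)>0\}$; pick a dominant $C^*$ with $\rho(Q|_{C^*}) = \rho_Q$ and any $\by^* \in C^*$. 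Chapman--Kolmogorov gives
\[
a_{n+k_m} \ge Q^{k_m}(\bo,\by^*)\sum_{\bz \in S} Q^n(\by^*,\bz) \ge \alpha\sum_{\bz \in C^*} (Q|_{C^*})^n(\by^*,\bz) \ge \alpha\cdot c\,\rho_Q^n
\]
for $n$ large, the innermost bound being standard Perron--Frobenius for the irreducible block $Q|_{C^*}$ (via pairing with its positive left eigenvector). This yields $\liminf_n a_n^{1/n} \ge \rho_Q$, and combined with the upper bound and the reduction above we conclude $\lim_n u_n^{1/n} = \rho_Q$.

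The main obstacle is the lower bound, where two things must be controlled: reachability from $\bo$ to a dominant SCC $C^*$ (handled by the shift-register analysis sketched above), and the behaviour of $(Q|_{C^*})^n$ when $C^*$ happens to be periodic. In the periodic case, individual entries $(Q|_{C^*})^n(\by^*,\bz)$ vanish for $n$ outside a specific residue class mod the period, but the row sum $\sum_{\bz \in C^*}(Q|_{C^*})^n(\by^*,\bz)$ remains of order $\rho_Q^n$ for all large $n$ as the chain's mass simply cycles through the periodic classes of $C^*$, so this does not affect the conclusion; a convenient alternative route is to exploit the self-loop $Q(\bo,\bo) = 1-\prod_j(1-p_j) > 0$, which forces the SCC containing $\bo$ to be aperiodic and thus gives clean asymptotics whenever that SCC happens to already be dominant.
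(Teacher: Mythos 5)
Your proof is correct and arrives at the same two-sided comparison $u_n \le a_n := \p_{\delta_\bo}(X_n\ne\bz)\le k_m u_{n-k_m+1}$ as the paper, but the way you identify $\lim_n a_n^{1/n}$ with $\rho_Q$ is genuinely different. The paper does this in one stroke: it expands the $\ell^1$ operator norm as $\|Q^n\|_1 = \sup\{\p_\mu(X_n\ne\bz): \mu \text{ a probability on } \{0,1\}^{k_m}\setminus\{\bz\}\}$ and uses attractiveness of the kernel \eqref{eq:transitionmatrixQ} (the new bit is $1$ with probability nondecreasing in $\bx$) to see that the supremum is attained at $\mu=\delta_\bo$; hence $\|Q^n\|_1 = a_n$ exactly, and Gelfand's formula gives both bounds at once. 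You instead use only the one-sided inequality $a_n\le\|Q^n\|$ for the upper direction (no monotonicity needed) and build the matching lower bound structurally: the shift-register observation $Q^{k_m}(\bo,\by)>0$ for every $\by\ne\bz$, the Frobenius normal form $\rho_Q=\max_C\rho(Q|_C)$, and a positive Perron eigenvector of a dominant irreducible block give $a_{n+k_m}\ge \alpha c\,\rho_Q^n$. This is more elementary in that it avoids the coupling/monotonicity argument, at the cost of the SCC bookkeeping. Two small points: for the row-sum bound $\sum_{\mathbf{w}}(Q|_{C^*})^n(\by^*,\mathbf{w})\ge c\,\rho_Q^n$ you should pair with the \emph{right} Perron eigenvector of $Q|_{C^*}$, not the left; and your reachability step genuinely uses $p_m>0$ (so that the leading coordinate of $X_t$ for $t<k_m$ makes appending a $1$ possible), so unlike the paper's attractiveness argument it does not extend verbatim to parameter vectors with a zero coordinate — a case tacitly needed when Lemma~\ref{cor:rhocrit} is invoked at boundary points of the critical surface — and would there require a small additional limiting step. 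Finally, avoid reusing $\bz$ as a summation index, since the paper reserves $\bz$ for $\mathbf{0}$.
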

\begin{proof}
For matrices doubly indexed by $\{0,1\}^{k_m} \setminus \{\bz\}$, consider the norm
\[
\|A\|_1 = \sup\{ |\mu A|_1 : |\mu|_1 = 1 \}
.
\]
By Gelfand's formula,
\[
\rho_Q = \lim_n (\|Q^n\|_1)^{1/n}
.
\]
We develop
\begin{align*}
\| Q^n \|_1
&=
\sup \bigg\{ \sum_{\by\ne\bz} \Big| \sum_{\bx\ne\bz} \mu(\bx) Q^n(\bx,\by) \Big| : \sum_{\bx\ne\bz} |\mu(\bx)|=1 \bigg\}
\\
& =
\sup \bigg\{ \sum_{\bx,\by\ne\bz} \mu(\bx) Q^n(\bx,\by) : \sum_{\bx\ne\bz} \mu(\bx)=1 \bigg\}
\\
& =
\sup \bigg\{ \p_\mu(X_n \ne \bz) : \sum_{\bx\ne\bz} \mu(\bx)=1 \bigg\}
\\
&=
\p_{\delta_{\bf1}}(X_n\ne{\bf0})
,
\end{align*}
where the second equality holds because the entries of $Q$ are non-negative, and the last equality holds by attractiveness of the model (larger configurations at time $0$ yield larger configurations at time $n$).

By definition of $X_n$, we have
\[
\| Q^n \|_1
=
\p_{\delta_\bo}(\cup_{i=n-k_m+1}^{n}\{Y_i=1\})
.
\]
Since $\p_{\delta_\bo}(Y_i=1) = u_i$, which is non-increasing in $i$, this yields
\[
u_{n-k_m+1}
\leq
\| Q^n \|_1
\leq
k_m
u_{n-k_m+1}
.
\]
Applying Gelfand's formula, we get
$\lim u_n^{1/n} = \rho_Q$, which concludes the proof.
%so
%\[
%\max_{i=n-k_m+1}^{n} \p_{\delta_\bo}(\{Y_i=1\})
%\leq
%\| Q^n \|_1
%\leq
%k_m
%\max_{i=n-k_m+1}^{n} \p_{\delta_\bo}(\{Y_i=1\}).
%\]
%According to~\eqref{eq:transitionY}, we have that 
%\[
%\p(Y_i=1|(Y_{i-1},\ldots,Y_{i-k_m})=\bx)\le \p(Y_i=1|(Y_{i-1},\ldots,Y_{i-k_m})=\by)
%\] 
%whenever $\bx\le \by$ coordinate-wise. This implies in particular that $\p_{\delta_\bo}(\{Y_i=1\})=\max_{\bx\in\{0,1\}^{k_m}}\p_{\delta_\bx}(\{Y_i=1\})$ and thus that $\p_{\delta_\bo}(\{Y_i=1\})$ is non-increasing in $i$, since
%\begin{align*}
%\p_{\delta_\bo}(\{Y_{i+1}=1\})&=\max_{\bx\in\{0,1\}^{k_m}}\p_{\delta_\bx}(\{Y_{i+1}=1\})\\
%&=\max_{\bx\in\{0,1\}^{k_m}}\sum_{a\in\{0,1\}}\p_{\delta_{(x_2,\ldots,x_{k_m},a)}}(\{Y_{i}=1\})\p_{\delta_\bo}(\{Y_{1}=a\})\\
%&\le\max_{(x_2,\ldots,x_{k_m})}\max_{a\in\{0,1\}}\p_{\delta_{(x_2,\ldots,x_{k_m},a)}}(\{Y_{i}=1\})\\
%&=\max_{\bx\in\{0,1\}^{k_m}}\p_{\delta_{\bx}}(\{Y_{i}=1\})\\
%&=\p_{\delta_\bo}(\{Y_{i}=1\}).
%\end{align*} 
%Thus,
%\[
%\max_{i=n-k_m+1}^{n} \p_{\delta_\bo}(\{Y_i=1\})=\p_{\delta_\bo}(\{Y_{n-k_m+1}=1\}),
%\]
%so 
%\[
%\limsup \p_{\delta_\bo}(\{Y_{n-k_m+1}=1\})^{1/n}\le \rho_Q \le \liminf k_m^{1/n}\p_{\delta_\bo}(\{Y_{n-k_m+1}=1\})^{1/n}
%\]
%and thus $\lim_n (\p_{\delta_\bo}(\{Y_n=1\}))^{1/n}=\lim u_n^{1/n}$ exists and equals $\rho_Q$.
\end{proof}

\begin{lemma}
\label{lemma:percu}
If $\rho_Q(\bp)<1/d$, then $\theta(\bp)=0$ and if $\rho_Q(\bp)>1/d$, then $\theta(\bp)>0$.
\end{lemma}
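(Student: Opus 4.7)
The plan is to identify $\{|\C(o)|=\infty\}$ with the survival event of a multi-type Galton-Watson process whose mean offspring matrix is $dQ$. For each $v\in\V$, let $a_i(v)$ denote the $i$-th ancestor of $v$ (with $a_0(v)=v$), set $\1\{o\to a_i(v)\}:=0$ whenever $a_i(v)$ is a fictitious ancestor above $o$, and define
\[
S(v) := \bigl(\1\{o\to a_{k_m-1}(v)\},\,\ldots,\,\1\{o\to v\}\bigr)\in\{0,1\}^{k_m},
\]
so in particular $S(o)=(0,\ldots,0,1)$. The first step is to verify that $(S(v))_{v\in\V}$ forms a multi-type Galton-Watson tree rooted at $o$ with $d$ offspring per non-absorbed vertex and one-step law $\mathcal Q$ from \eqref{eq:transitionmatrixQ}. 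The key observation is that every edge into a child $c$ of $v$ originates at one of the ancestors $a_{k_j}(c)=a_{k_j-1}(v)$, so $\1\{o\to c\}$ is determined by $S(v)$ together with the open/closed status of those (at most $m$) incoming edges. Since these edges are vertex-disjoint across different children of $v$ and across children of different vertices at the same depth, conditional independence follows, and the one-step transition matches $\mathcal Q$ by direct calculation.

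Next, I would show that BP survival coincides with $\{|\C(o)|=\infty\}$. One direction is immediate since $o\to v$ forces $S(v)\neq\bz$. For the converse, each $u\in\C(o)$ can act as the reached ancestor for at most $\tfrac{d^{k_m}-1}{d-1}$ descendants within distance $k_m-1$, so infinitely many non-absorbed $v$ forces $|\C(o)|=\infty$. The mean offspring matrix on the non-absorbed types $\{0,1\}^{k_m}\setminus\{\bz\}$ equals $dQ$, with spectral radius $d\rho_Q$, and the classical multi-type Galton-Watson dichotomy then yields $\theta(\bp)=0$ when $d\rho_Q<1$ and $\theta(\bp)>0$ when $d\rho_Q>1$, proving the lemma.

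The main obstacle I anticipate is justifying the application of the Galton-Watson dichotomy: the matrix $Q$ is very sparse (from each state only two transitions are allowed, of the form \emph{shift and append a bit}), so a positive-regularity or irreducibility argument is needed to show that any non-zero state communicates with any other non-zero state in finitely many steps with positive probability. This will likely invoke the structural hypothesis $\gdc\{k_1,\ldots,k_m\}=1$ featured in Theorem~\ref{thm:multi-edges}. A minor additional point worth checking is that the non-standard initial condition $S(o)=(0,\ldots,0,1)$ lies in the non-absorbed set, so that the survival dichotomy applied at this state indeed governs $\theta(\bp)$.
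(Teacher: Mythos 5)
Your construction is correct and genuinely different from the paper's argument. You build a multi-type Galton--Watson process directly on the vertex set $\V$, with type $S(v)$ encoding the connectivity of $o$ to the last $k_m$ ancestors of $v$, mean offspring matrix $d\mathcal Q$, and you correctly verify both the shift-and-append transition structure and the equivalence between branching-process survival and $\{|\C(o)|=\infty\}$ (the counting bound $\tfrac{d^{k_m}-1}{d-1}$ is right). The paper instead works one-dimensionally: it tracks the connectivity decay $u_n$ along a single branch, identifies $\rho_Q=\lim u_n^{1/n}$ via Gelfand's formula (Lemma~\ref{lemma:utorho}), and then handles the two cases of Lemma~\ref{lemma:percu} by a first-moment bound ($\theta\le d^{k_m}d^n u_n$) and by embedding a \emph{single-type} Galton--Watson tree with offspring law $|\A_n|$, with $n$ chosen so that $\e|\A_n|>1$. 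The paper itself remarks that your branching-process formulation is a viable alternative.

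The trade-off is exactly the one you anticipate. Your supercritical direction leans on the multi-type Galton--Watson dichotomy, which in the classical form requires positive regularity of the mean matrix. As you note, primitivity of $Q$ does indeed rely on $\gcd\{k_1,\dots,k_m\}=1$ (this is the paper's Lemma~\ref{lemma:primitive}), so your proof would prove Lemma~\ref{lemma:percu} only under that extra hypothesis. The paper's proof avoids any irreducibility assumption: for $\rho_Q<1/d$ it is a bare first-moment bound, and for $\rho_Q>1/d$ it manufactures a single-type supercritical process without ever invoking Perron--Frobenius. In practice this is a modest loss of generality (Theorem~\ref{thm:multi-edges} assumes the gcd condition anyway, and Section~\ref{sec:monot} reduces to it), but it is worth flagging that your argument is not a drop-in proof of the lemma as stated. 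Two smaller points to fill in: (i) for the subcritical case the initial type $(0,\dots,0,1)$ is fine as a pure first-moment argument (no primitivity needed there), and (ii) for the supercritical case you should record that from $(0,\dots,0,1)$ one reaches, with positive probability, a vertex of type $\bo$ within $k_m-1$ generations, so that the survival dichotomy from the positively regular theory indeed governs $\theta(\bp)$.
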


\begin{proof}
For $n\in \N$, consider the random set 
\[
\A_n:=\{w\in \V^{(n)}:\mathcal C(o)\cap \V^{\ge w}\ne\emptyset\}
\]
and observe that $d^n u_n \leq \mathbb E|\A_{n}| \leq d^{k_m} d^{n} u_{n}$.

We start by proving the first statement.
For each $n \in \mathbb{N}$, on the event that $|\mathcal C(o)|=\infty$,
we have $\A_n \ne \emptyset$.
Hence,
\[
\theta(\bp)
\leq
\p(\A_n \ne \emptyset)
\leq
d^{k_m} d^n u_n
.
\]
Assuming $\rho_Q(\bp)< \frac{1}{d}$, we can take $\alpha<\frac{1}{d}$ such that $\rho_Q(\bp)<\alpha$.
By Lemma~\ref{lemma:utorho}, we have $u_n < \alpha^n$ for all sufficiently large $n$.
Thus $d^{k_m} d^n u_n<d^{k_m}(d\alpha)^n\rightarrow0$, implying that $\theta(\bp)=0$. This concludes the proof of the first part.

We now prove the second part.
Assuming that $\rho_Q(\bp) > \frac{1}{d}$, we can take $\alpha>\frac{1}{d}$ such that $\rho_Q(\bp)>\alpha$. Since $\rho_Q(\bp)=\lim_n u_n^{1/n}$, we can take $n$ such that $u_n>\alpha^{n}$.
We will define a random subset $\mathcal T\subseteq\mathcal C(o)$ distributed as a Galton-Watson branching process whose offspring size has the same distribution as $|\A_n|$.
Since
$
\mathbb E|\A_{n}| \geq d^{n} u_{n} > \left(\alpha d\right)^{n}>1,
$
we have
$\mathbb P(|\mathcal T|=\infty)>0$, and therefore 
$\theta(\bp)>0$.

Let us define $\mathcal T$.
The $0^{\text{th}}$ generation of $\mathcal T$ is the root $o$.
The offspring of a vertex $w$ belonging to some generation of $\mathcal T$ is defined as follows.
Define the random set
\[
\A_n^{w}:=\{v\in\mathcal V^{w+n}:\mathcal C(w)\cap \mathcal V^{\ge v}\neq\emptyset\},
\]
where $\mathcal V^{w+n}:=\{x\in\mathcal V^{\ge w}:d(w,x)= n\}$.
For each $v\in \A_n^{w}$, select, among all vertices $u\in \mathcal V^{\ge v}$ such that $w \to u$, one that minimizes $d(w,u)$, to belong to the offspring of $w$.
With this construction, the size of the offspring of $w$ equals $|\A_n^w|$, which has the same distribution $|\A_n|$.
Moreover, the offspring of $w$ is conditionally independent of the offspring of other vertices in the same generation, showing that $\mathcal T$ is indeed distributed as a Galton-Watson branching process.
\end{proof}

The next lemma is a simple consequence of Lemma~\ref{lemma:percu}.
\begin{lemma}
\label{cor:rhocrit}
The critical surface of the multi-edge percolation model is contained in
\[
\{\bp\in[0,1/d^{k_1}]\times\dots\times[0,1/d^{k_m}]:\rho_Q(\bp)=1/d\}.
\]
\end{lemma}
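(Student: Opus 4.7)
The plan is to derive this lemma as a direct consequence of Lemma~\ref{lemma:percu}, using the continuity of $\rho_Q$ in $\bp$ already recorded earlier in this section, together with a short monotonicity argument to handle the box constraint.

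First, I would argue that $\cS_c$ lies inside the box $[0,1/d^{k_1}]\times\dots\times[0,1/d^{k_m}]$. Suppose some coordinate satisfies $p_j > 1/d^{k_j}$. Restricting the environment to edges of length $k_j$ only (setting every other $p_i$ to $0$ in a coupled sub-process) one obtains, at each vertex, $d^{k_j}$ independent descendants connected with probability $p_j$, i.e.\ bond percolation on the $d^{k_j}$-ary tree with parameter $p_j$ strictly above its critical value $1/d^{k_j}$. That sub-process percolates, so by monotonicity $\theta(\bp)>0$. Moreover the condition $p_j>1/d^{k_j}$ is open in $\bp$, so $\bp$ sits in the \emph{interior} of $\{\theta>0\}$ and cannot belong to its topological boundary $\cS_c$.

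Second, I would take any $\bp\in\cS_c$ and exploit that it is a boundary point: there exist sequences $\bp^{(n)}\to\bp$ with $\theta(\bp^{(n)})>0$ and $\tilde\bp^{(n)}\to\bp$ with $\theta(\tilde\bp^{(n)})=0$. The contrapositives of the two implications in Lemma~\ref{lemma:percu} read $\theta>0 \Rightarrow \rho_Q\ge 1/d$ and $\theta=0 \Rightarrow \rho_Q\le 1/d$. Applying them along the two sequences, then using the continuity of $\rho_Q$ in $\bp$, sandwiches $\rho_Q(\bp)$ between $1/d$ and $1/d$, yielding $\rho_Q(\bp)=1/d$.

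I do not anticipate any real obstacle: the substantive work has been done in Lemma~\ref{lemma:percu} and in the preceding continuity remark. The only points requiring care are (i) making sure the statement refers to the topological boundary rather than a level set of $\theta$, so that approximating sequences on both sides are available, and (ii) invoking the continuity of the spectral radius $\rho_Q$, which has already been observed in the paragraph just before Lemma~\ref{lemma:utorho}. Apart from these, the argument is essentially a two-sided limiting statement combined with the elementary monotone comparison above.
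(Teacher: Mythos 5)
Your argument is correct and, for the main assertion $\rho_Q(\bp)=1/d$, it follows essentially the same logic as the paper: both use continuity of $\rho_Q$ and Lemma~\ref{lemma:percu}, the only difference being one of presentation. The paper argues directly that if $\rho_Q(\bp)\neq 1/d$ then the inequality (and hence the sign of $\theta$) persists in a neighbourhood of $\bp$, so $\bp$ cannot be a boundary point; you instead pick $\bp$ on the boundary, extract approximating sequences from both sides, apply the contrapositives of Lemma~\ref{lemma:percu}, and let continuity sandwich $\rho_Q(\bp)$ at $1/d$. These two formulations are logically interchangeable.

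Where your write-up goes a step beyond the paper's one-line proof is the explicit verification of the box constraint $\bp\in[0,1/d^{k_1}]\times\dots\times[0,1/d^{k_m}]$: you observe that $p_j>1/d^{k_j}$ already puts $\bp$ in the open interior of $\{\theta>0\}$, via comparison with supercritical Bernoulli percolation on the $d^{k_j}$-ary tree using only edges of length $k_j$. The paper's proof establishes only the eigenvalue condition and tacitly leaves the containment in the box implicit; your monotone comparison supplies that missing (elementary) piece, so your version is actually the more complete one.
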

\begin{proof}
%The proof is a direct consequence of Lemma~\ref{lemma:percu}.
If $\rho_Q(\bp)>1/q$, by continuity of $\rho_Q$ this inequality (and thus $\theta(\bp)>0$ by Lemma~\ref{lemma:percu}) holds in a neighbourhood of $\bp$, which implies that $\bp$ is away from the critical surface.
On the other hand, if $\rho_Q(\bp)<1/q$, by continuity of $\rho_Q$ this inequality (and thus $\theta(\bp)=0$ by Lemma~\ref{lemma:percu}) holds in a neighbourhood of $\bp$, which again implies that $\bp$ is away from the critical surface.
Therefore, on the critical surface, we can only have $\rho_Q = 1/d$.
\end{proof}

\begin{lemma}
\label{lemma:primitive}
Suppose $\gcd\{k_1,\dots,k_m\}=1$ and $\bp>\mathbf{0}$.
Then $Q$ is primitive. In particular, by the Perron-Frobenius Theorem, $\rho_Q$ is a real eigenvalue of $Q$.
\end{lemma}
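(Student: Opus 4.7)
The plan is to prove primitivity of $Q$—i.e.\ the existence of $N\in\N$ with $Q^N(\bx,\by)>0$ for every pair $\bx,\by\in\{0,1\}^{k_m}\setminus\{\bz\}$—from which Perron-Frobenius immediately delivers $\rho_Q$ as a real (in fact simple and positive) eigenvalue. Two ingredients are easy: first, $\bo$ admits a self-loop, since $Q(\bo,\bo)=1-\prod_{j=1}^m(1-p_j)>0$ under $\bp>\bz$; second, starting from $X_0=\bo$ one can reach any target state $\by\ne\bz$ in exactly $k_m$ steps, because each reference bit $Y_{j-k_i}$ entering the transition for $Y_j$ with $1\le j\le k_m$ is an initial bit of $\bo$ and hence equal to $1$, so both $Y_j=0$ and $Y_j=1$ have strictly positive probability; the intermediate states $X_j$ with $0<j<k_m$ also keep $k_m-j\ge 1$ inherited $1$-bits and remain non-absorbed.

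The substantive step is to show that from any $\bx\ne\bz$ the chain reaches $\bo$ in a uniformly bounded number of steps with positive probability; this is where $\gcd\{k_1,\dots,k_m\}=1$ enters. I split this into two stages. \emph{Stage A} brings $\bx$ to the canonical state $(0,\dots,0,1)$ in $i^\star\le k_m$ steps, where $i^\star$ is the largest index with $x_{i^\star}=1$: set $Y_1=\cdots=Y_{i^\star-1}=0$ (always possible) and $Y_{i^\star}=1$, which is possible because the reference $Y_{i^\star-k_m}=x_{i^\star}=1$; a direct check shows that each intermediate $X_l$ still carries the original $1$-bit at position $i^\star-l\in[1,k_m]$. \emph{Stage B} runs from $(0,\dots,0,1)$ to $\bo$ using the greedy strategy setting $Y_l=1$ whenever this has positive probability, and defines $T^\star:=\{l\ge 1:Y_l=1\}$. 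A short induction on $l$ shows $T^\star\supseteq M\setminus\{0\}$, where $M:=\{\sum_j n_j k_j:n_j\in\N_0\}$ is the numerical monoid generated by $k_1,\dots,k_m$: indeed, starting from the unique initial $1$-bit at index $0$, every additive partial sum is already strictly positive and thus lies in the greedy-controlled region. The Sylvester-Frobenius theorem applies since $\gcd\{k_j\}=1$, giving $M\supseteq[F+1,\infty)$ for some $F\in\N$, hence $T^\star\supseteq[F+1,\infty)$. Every intermediate window $[l-k_m+1,l]$ of length $k_m$ contains either the initial index $0$ or a positive multiple of $k_m$, all of which carry $Y=1$, so the state stays non-absorbing; for $N':=F+k_m$ the window $[N'-k_m+1,N']$ lies in $T^\star$ and $X_{N'}=\bo$.

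Combining the pieces, for any $\bx\ne\bz$ stages A+B reach $\bo$ in at most $N^\star:=2k_m+F$ steps; padding shorter trajectories with self-loops at $\bo$ and then composing with the $k_m$-step reach-any construction above yields $Q^{N^\star+k_m}(\bx,\by)>0$ for every pair $\bx,\by\ne\bz$, which is the desired primitivity, and Perron-Frobenius closes the argument. The main obstacle I expect is the bookkeeping in stage B: one must simultaneously verify that each greedy choice $Y_l=1$ is supported by a strictly positive-probability reference, that $T^\star$ captures the entire non-zero monoid, and that every intermediate window meets a position where $Y=1$. The preliminary reduction of stage A is precisely the device that avoids the difficulty that would otherwise arise from additive paths passing through non-initial indices in $(-k_m+1,0]$, where $Y$ is frozen at $0$ and the induction would break.
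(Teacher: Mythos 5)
Your proof is correct and takes essentially the same route as the paper's: use $\gcd\{k_1,\dots,k_m\}=1$ to show that the numerical semigroup $M=\langle k_1,\dots,k_m\rangle$ is cofinite, conclude that from any nonzero state the chain reaches $\bo$ with positive probability in a bounded number of steps, and finish by observing that from $\bo$ one can reach any $\by\ne\bz$ in $k_m$ further steps. The one real difference is the source of cofiniteness: the paper re-derives it by hand via the congruence $\tilde a_1 k_1+\dots+\tilde a_m k_m=bM+1$ with $M=k_1\cdots k_m$, while you invoke the Sylvester--Frobenius theorem directly, which is a matter of taste rather than substance. Two points where your write-up is in fact more careful than the paper's: (i) the preliminary normalization of $\bx$ to the canonical state $(0,\dots,0,1)$ makes the hitting time of $\bo$ manifestly uniform over starting states without any bookkeeping about where the initial $1$-bits sit, and (ii) the explicit use of the self-loop $Q(\bo,\bo)>0$ to pad trajectories to a common length $N^\star+k_m$ supplies exactly the single exponent $N$ with $Q^N>0$ that primitivity requires, whereas the paper asserts a fixed step count $k_m(bM+2)$ somewhat informally and leaves the uniformity implicit.
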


\begin{proof}
The proof is somewhat trivial and only uses modular arithmetic.
Assume $\gcd\{k_1,\dots,k_m\}=1$ and let $M=k_1 \cdots k_m$.
We know that there is an integer linear combination $a_1 k_1 + \dots + a_m k_m = 1 $.
In particular, $a_1 k_1 + \dots + a_m k_m = 1 \mod M$. Taking $\tilde{a}_j = a_j \mod M$, we find positive coefficients such that $\tilde{a}_1 k_1 + \dots + \tilde{a}_m k_m = bM+1$ for some $b\in\N$.
So from any given position $i \in \N$, we can reach both $i+bM$ and $i+bM+1$.
From there, we can reach $i+2bM$, $i+2bM+1$ and $i+2bM+2$, and so on.
In the end, we can reach $i+k_mbM,\dots,i+k_m(bM+1)$.
Starting from any $\mathbf{x}\ne \mathbf{0}$, taking some $i \in \{1,\dots,k_m\}$ such that $x_i \ne 0$, after $k_m(bM+2)$ steps we can reach $\mathbf{x} = \mathbf{1}$ with positive probability, and from there we can reach any other $\mathbf{x}$ within another $k_m$ steps, also with positive probability, proving that $Q$ is primitive.
\end{proof}

We conclude this section with the proof of Theorem~\ref{thm:multi-edges}.

\begin{proof}
[\proofname\ of Theorem~\ref{thm:multi-edges}]
For $\bp$ on the critical surface, Lemma~\ref{cor:rhocrit} says that $\rho_Q(\bp)=1/d$.
It remains to justify that $1/d$ is actually an eigenvalue of $Q(\bp)$.
If $\bp > \bz$, then Lemma~\ref{lemma:primitive} says that $\rho_Q$ is an eigenvalue of $Q(\bp)$ and there is nothing left to prove.
So suppose $p_j=0$ for some $j$.
Note that, since $\bp$ is on the critical surface, we have $\theta>0$ in part of the neighbourhood of $\bp$.
Consider $\bp'=\bp+(\varepsilon,\dots,\varepsilon)$.
By monotonicity of $\theta$ we have $\theta(\bp')>0$, thus $\rho_Q(\bp')\geq 1/q$, and by the previous lemma the matrix $Q(\bp')$ has a real eigenvalue $\lambda_\varepsilon \geq 1/q$.
Letting $\varepsilon \downarrow 0$, we see that $Q(\bp)$ has a real eigenvalue $\lambda_0 \geq 1/d$.
Since $|\lambda| \leq \rho_Q$ for every eigenvalue $\lambda$, we conclude that $\lambda_0 = 1/d$, concluding the proof.
\end{proof}

\subsection{Recursive bounds}
\label{sec:double2}
Here we will prove Theorem~\ref{thm:bounds}. We are assuming that $m=2$ and denoting $k_1=l$, $k_2=k$, $p_1=p$ and $p_2=q$. 

Recall the process $(Y_n)_n$ defined in Section~\ref{sec:markov}, and $u_i=\p(Y_i=1)$.
For $i\ge k$
\begin{align}\label{eq:rec}
u_i=\p(Y_i=1)=&\p(Y_{i-k}=0,Y_{i-l}=1,Y_i=1)+\p(Y_{i-k}=1,Y_{i-l}=0,Y_i=1)+\nonumber\\&+\p(Y_{i-k}=1,Y_{i-l}=1,Y_i=1)\nonumber\\
=&\p(Y_{i-k}=0,Y_{i-l}=1)p+\p(Y_{i-k}=1,Y_{i-l}=0)q+\nonumber\\&+\p(Y_{i-k}=1,Y_{i-l}=1)(p+q-pq)\nonumber\\
=&\p(Y_{i-l}=1)p+\p(Y_{i-k}=1)q-\p(Y_{i-k}=Y_{i-l}=1)pq.
\end{align}

The third term prevents us from solving this recursive equation explicitly.
However, we can replace it by lower and upper bounds.
Observing that 
\[
\p(Y_{{i-l}}= Y_{{i-k}}=1)\le \p(Y_{{i-l}}=1)=u_{i-l}
,
\]
we obtain
\begin{align}
\label{eq:upper3}
u_i &\ge
p(1-q)u_{i-l}+qu_{i-k}
.
\end{align}
In the other direction, observing that
\[
\p(Y_{{i-l}}= Y_{{i-k}}=1)
=
\p(Y_{{i-l}}=1| Y_{{i-k}}=1)u_{i-k}
\ge
p^{k-l}u_{i-k}
,
\]
we obtain
\begin{align}
\label{eq:lower3}
u_i &\le
pu_{i-l}+q(1-p^{k-l+1})u_{i-k}
.
\end{align}

We can use these recursive equations to prove our bounds.

We start by proving the upper bound in Theorem~\ref{thm:bounds} using~\eqref{eq:upper3}.
Consider the recurrence
\begin{equation}
\label{eq:recurrence}
v_i = p(1-q) v_{i-l} + q v_{i-k}
.
\end{equation}
The polynomial associated to it is
\[
g(x)=x^k-p(1-q)x^{k-l}-q.
\]
By Descartes Rule of Signs, we have: (i) if $k$ is even, then $g$ has exactly two real roots, one positive and one negative, and (ii) if $k$ is odd then it has exactly one real root and it is positive.
In both cases, there is a unique positive root, that we denote $\rho$.

Consider the sequence $v_i = c \rho^i$, which satisfies~\eqref{eq:recurrence}.
The constant $c>0$ is taken so that $u_i \geq v_i$ for $i = 1,\dots, k$.
Now note that the same inequality can be extended for $i=k+1,k+2,\dots$ by using~\eqref{eq:upper3} and~\eqref{eq:recurrence} inductively.
In particular, $\lim_n u_n^{1/n} \ge \rho$.

Finally, since $g(x)<0$ for $0<x<\rho$ and $g(x)>0$ for $x>\rho$, we have the following equivalence:
\[
g({1}/{d}) < 0
\Longleftrightarrow
\rho > {1}/{d}.
\]

Thus, for every $(p,q)$ such that
\[
\frac{1}{d^k}-p(1-q)\frac{1}{d^{k-l}}-q<0,
\]
we have $\lim_n u_n^{1/n}>1/d$ and thus, using Lemmas~\ref{lemma:utorho} and~\ref{lemma:percu}, $ \theta(p,q) > 0 $.
In other words, if
\[
q>\frac{1-pd^l}{d^k-pd^l}
\]
then $\theta(p,q) > 0$.
This proves the upper bound of Theorem~\ref{thm:bounds}.

Finally, let us prove the lower bound using~\eqref{eq:lower3}.
Applying the exact same reasoning as above we get that, for every $(p,q)$ such that
\[
\frac{1}{d^k}-p\frac{1}{d^{k-l}}-q(1-p^{k-l+1})>0,
\]
we have $\lim_n u_n^{1/n} < 1/d$ and thus $\theta(p,q)=0$.
In other words, if
\[
q<\frac{1-pd^l}{d^k(1-p^{k-l+1})}
\]
then $\theta(p,q) = 0$.
This proves the lower bound of Theorem~\ref{thm:bounds}. 

\subsection{Strict monotonicity of the critical curve}\label{sec:monot}

Here we prove Theorem~\ref{thm:curvemonot}. Recall that we are assuming that $m=2$ and denoting $k_1=l$, $k_2=k$, $p_1=p$ and $p_2=q$.

\begin{lemma}
\label{lemma:sillybound}
For all $ q>0 $, we have $ p_c(q) < d^{-l} $.
For all $ p>0 $, we have $ q_c(p) < d^{-k} $.
\end{lemma}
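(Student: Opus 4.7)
The plan is to obtain both inequalities as direct corollaries of the upper bounds on the critical curve that have already been established, together with a one-line algebraic comparison exploiting $d^l, d^k > 1$. No new probabilistic input is required; the lemma's nickname ``silly'' suggests exactly this.

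For the second inequality, I would apply Proposition~\ref{prop:easybounds} verbatim: for $p \in (0, 1/d^l]$,
\[
q_c(p) \le \frac{1 - pd^l}{(1-p)d^k}.
\]
Since $l \ge 1$ and $d \ge 2$, we have $d^l \ge 2$, hence $pd^l > p$ and therefore $1 - pd^l < 1 - p$ for every $p > 0$. Dividing, $q_c(p) < d^{-k}$. In the remaining range $p > 1/d^l$, percolation already occurs using only short edges (as the short-edge subgraph is a $d^l$-ary Galton–Watson tree with mean $pd^l > 1$), so $q_c(p) = 0 < d^{-k}$ trivially.

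For the first inequality, I would first derive the symmetric upper bound
\[
p_c(q) \le \frac{1 - qd^k}{(1-q)d^l}
\]
for $q \in (0, 1/d^k]$, by either of two equivalent routes: (a) repeat the reduced-model argument of Proposition~\ref{prop:easybounds} with the roles of short and long edges swapped, declaring a short edge to be open only when every long edge at the same vertex is closed and thereby exhibiting a Galton–Watson embedding with mean offspring $(1-q)pd^l + qd^k$; or (b) simply observe that the upper bound of Theorem~\ref{thm:bounds}, namely $q > (1 - pd^l)/(d^k - pd^l)$, is algebraically the same region as $p > (1-qd^k)/((1-q)d^l)$. The same one-line inequality $1 - qd^k < 1 - q$ (valid because $d^k > 1$ and $q > 0$) then yields $p_c(q) < d^{-l}$. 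The boundary case $q > 1/d^k$ is again trivial, since the long-edge subgraph alone percolates, forcing $p_c(q) = 0$.

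I expect no real obstacle. The only care required is the bookkeeping for the corner cases $p \ge 1/d^l$ or $q \ge 1/d^k$, where the critical coordinate is identically zero and the bound to be proved becomes immediate.
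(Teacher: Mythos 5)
Your argument works, but via a genuinely different route than the paper's, and with a flaw in your alternative (a) that is worth flagging. The paper proves this lemma probabilistically: it observes that the sequence $x_j := \p(w\in\C(o))$ for $w\in\V^{(jkl)}$ is supermultiplicative, and that a vertex at depth $kl$ can be reached either by $k$ edges of length $l$ or by $l$ edges of length $k$ (two edge-disjoint paths), giving $\rho_Q \geq (p^k+q^l-p^kq^l)^{1/kl}$, which strictly exceeds $1/d$ when $p=d^{-l},q>0$ or $q=d^{-k},p>0$. You instead bootstrap from the already-established upper bounds via one-line algebra; this is shorter and perfectly valid, and your route (b) is sound: the region $q>\frac{1-pd^l}{d^k-pd^l}$ does rearrange exactly to $p>\frac{1-qd^k}{(1-q)d^l}$ on $[0,d^{-l}]\times[0,d^{-k}]$, and the comparison $1-qd^k<1-q$ finishes. (In fact Proposition~1 alone would do: $q>\frac{1-pd^l}{(1-p)d^k}$ rearranges to $p>\frac{1-qd^k}{d^l-qd^k}$, and the same comparison applies.) However, your route (a) is incorrect as stated: a short edge $v\to u$ has $d^{k-l}$ long edges passing through $u$, not one, so "short edge open only when the long edges at $v$ are closed'' yields offspring mean $p(1-q)^{d^{k-l}}d^l+qd^k$ (or $p(1-q)^{d^k}d^l+qd^k$ under your literal "every long edge'' phrasing), not $(1-q)pd^l+qd^k$. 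The asymmetry is real: in the paper's reduction each long edge has a unique corresponding short edge, while the reverse correspondence is one-to-many. That modified mean still exceeds $1$ at $p=d^{-l},q>0$ (since $q\mapsto(1-q)^{d^{k-l}}+qd^k$ is strictly increasing from $1$), so route (a) could be repaired, but it would no longer deliver the bound $p_c(q)\le\frac{1-qd^k}{(1-q)d^l}$ you advertise. Since you offer (a) and (b) as alternatives and (b) stands, the overall proof is correct; the trade-off is that the paper's probabilistic bound on $\rho_Q$ is self-contained, whereas yours leans on Theorem~\ref{thm:bounds} (or Proposition~\ref{prop:easybounds}), which is logically fine here as those precede Lemma~\ref{lemma:sillybound}.
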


\begin{proof}
We claim that
$\rho_Q \ge ( p^k + q^l - p^k q^l )^{1/lk}$.
Indeed, let $x_j:=\p(w\in\mathcal C(o))$ for any $w\in\V^{(jkl)}$.
This sequence is positive supermultiplicative, that is $x_{i+j} \ge x_ix_j$ for any $i,j \ge 0$.
Therefore $\lim_j x_j^{1/j}= \sup_j x_j^{1/j}$.
Now note that, among all possible open paths to connect two vertices that at distance $kl$ apart, one consists of $k$ short edges and another, independent of this one, consists of $l$ long edges.
Hence, $ x_{1} \geq p^k + q^l - p^k q^l $ and therefore $\lim_j x_j^{1/j} \geq p^k + q^l - p^k q^l$.
On the other hand,
$
u_{jkl} \geq x_j
$
and thus $\lim_n u_{n}^{1/n} = \lim_j u_{jkl}^{1/jkl} \geq \lim_j (x_j^{1/j})^{1/kl} \geq (p^k + q^l - p^k q^l)^{1/kl}$.
This proves the claim.

By Lemmas~\ref{lemma:utorho} and~\ref{lemma:percu}, $\lim_n u_{n}^{1/n} > 1/d$ implies that $\theta(p,q)>0$, concluding the proof.
\end{proof}

\begin{lemma}
\label{lemma:pczero}
For $q=0$, $\rho_Q = p^{1/l}$.
For $p=0$, $\rho_Q = q^{1/k}$.
\end{lemma}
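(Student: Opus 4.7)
The plan is to invoke Lemma~\ref{lemma:utorho}, which identifies $\rho_Q$ with $\lim_n u_n^{1/n}$, and then to compute this limit in each degenerate case by observing that the recurrence~\eqref{eq:rec} collapses to a single-lag recursion. Concretely, when $q = 0$, the second and third summands on the right-hand side of~\eqref{eq:rec} vanish, leaving
\[
u_i \;=\; p \, u_{i-l} \quad \text{for all } i \geq k,
\]
and, symmetrically, when $p = 0$ one obtains $u_i = q \, u_{i-k}$ for all $i \geq k$.

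From either of these scalar geometric recursions, the behaviour of $u_n$ is transparent. In the case $q = 0$, the recursion can also be read off probabilistically: with only short edges of length $l$ available, the event $\C(o) \cap \V^{\swarrow n} \neq \emptyset$ is precisely the event that the first $\lceil n/l \rceil$ short edges along $\V^{\swarrow}$ are all open, so $u_n = p^{\lceil n/l \rceil}$ for $n \geq 1$. In particular $u_n^{1/n} \to p^{1/l}$, and Lemma~\ref{lemma:utorho} gives $\rho_Q = p^{1/l}$. The case $p = 0$ is handled by the analogous formula $u_n = q^{\lceil n/k \rceil}$, leading to $\rho_Q = q^{1/k}$.

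I do not anticipate any substantive obstacle: once~\eqref{eq:rec} reduces to a one-term recursion, the exponential rate is elementary, and the finitely many initial values $u_0, u_1, \dots, u_{k-1}$ (all in $[0,1]$) cannot affect it. If desired, one could alternatively diagonalise $Q$ directly at $q=0$ (respectively $p=0$), since in these limits the transition matrix has a block structure that decouples into $l$ (respectively $k$) identical one-dimensional chains with transition $p$ (respectively $q$), giving spectral radius $p^{1/l}$ (respectively $q^{1/k}$); but this merely duplicates what Lemma~\ref{lemma:utorho} already supplies.
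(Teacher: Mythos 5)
Your argument is essentially the same as the paper's: both identify $u_n = p^{\lceil n/l \rceil}$ (resp.\ $u_n = q^{\lceil n/k \rceil}$) in the degenerate case and then invoke Lemma~\ref{lemma:utorho} to conclude $\rho_Q = p^{1/l}$ (resp.\ $q^{1/k}$). The detour through the collapsed recursion~\eqref{eq:rec} and the remark about block-diagonalising $Q$ are harmless embellishments, but the core step and the key lemma match the paper exactly.
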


\begin{proof}
For the first claim, it suffices to observe that
\[
\p_{\delta_\bo}(Y_n =1) = p^{\lceil n/l \rceil}
,
\]
thus
\[
\lim_n (\p_{\delta_\bo}(Y_n =1))^{1/n} = p^{1/l}
,
\]
which by Lemma~\ref{lemma:utorho} means $\rho_Q = p^{1/l}$.
The proof of the second statement is identical.
\end{proof}

We are ready to prove Theorem~\ref{thm:curvemonot}.
We can assume that $\gcd\{l,k\}=1$, for otherwise we can consider an equivalent model with parameters $l'=l/n$, $k'=k/n$ and $d'=d^n$, where $n=\gcd\{l,k\}$.
So we can use Lemma~\ref{lemma:primitive}.

\begin{proof}
[\proofname\ of Theorem~\ref{thm:curvemonot}]
Let $q'>q''>0$ and suppose $ p_c(q'')>0 $.
We want to prove that $ p_c(q') < p_c(q'') $.
We can assume that $ p_c(q')>0 $, otherwise the claim holds trivially.

By Lemma~\ref{cor:rhocrit}, $\rho_Q(p_c(q'),q')=1/d$ and $\rho_Q(p_c(q''),q'')=1/d$.
Suppose by contradiction that $ p_c(q') = p_c(q'')=p_c $.
By monotonicity of $\rho_Q$, we would have $\rho_Q(p_c,q)=1/d$ for every $q \in [q'',q']$.
By Lemma~\ref{lemma:primitive}, $1/d$ would be an eigenvalue of $Q_{p_c,q}$ for every $q \in [q'',q']$, that is, $P_Q(1/d;p_c,q)=0$ for every $q \in [q'',q']$.
Since $P_Q$ is a polynomial in all its three variables, we would deduce that $P_Q(1/d;p_c,q)=0$ for every $q\in \R$, and in particular $\rho_Q(p_c,q=0)=1/d$.
However, by Lemma~\ref{lemma:pczero} we have $\rho_Q(p_c,q=0)=p_c^{1/l}$ and by Lemma~\ref{lemma:sillybound} we have $p_c^{1/l}<1/d$, a contradiction.

This proves the first claim of the theorem. The second claim is analogous.
\end{proof}
 
\section{Convexity of the critical curve}\label{sec:conjecture}

In this section, 
we provide a justification for Conjecture~\ref{conj:convexity} based on the following ansatz regarding small values of $p$.
Let $d \geq 2$, $m=2$, $k_1=1$ and $k_2=k \geq 2$ be fixed.
\begin{assumption}
There exists a positive constant $C(k,d)$ such that,
for all $ p<\frac{1}{2d} $ and $ \frac{1}{2d^k} < q < \frac{2}{d^k} $, for all $n>k$,
\begin{equation}
\label{eq:assumption_conjecture}
\p(Y_{n-k}=1 | Y_{n-1}=1) \leq C(k,d) p.
\end{equation}\end{assumption}
We believe that this assumption is true.
Indeed, when $0 < p \ll q$, knowing that $Y_{n-1}=1$, most likely the position $n-1$ is reached by a chain of open edges that end with several long edges (of length $k$).
In such a configuration, we have $Y_{n-1-k}=1$ and, in order to also have $Y_{n-k}=1$ we need to open a short edge near the end of this chain of long edges, and this happens with probability proportional to $p$.

In the remainder of this section we justify Conjecture~\ref{conj:convexity} under the above assumption.

Recall~\eqref{eq:rec} and note that it can be rewritten as
\begin{equation}\label{eq:recursionci}
u_i=p u_{i-1} + q u_{i-k} - pq r_i u_{i-1},
\end{equation}
where
\begin{equation}
\nonumber
r_i := \p(Y_{i-k}=1 | Y_{i-1}=1)=\frac{\p(X_{i-1}(1)=X_{i-1}(k)=1|X_{i-1}\ne{\bf0})}{\p(X_{i-1}(k)=1|X_{i-1}\ne{\bf0})}.
\end{equation}

Recall the construction of the $(k-1)$-step Markov chain $(X_j)_j$ done in Section~\ref{sec:markov}.
The theory of quasi-stationary distributions for finite Markov chains (see \cite{collet2012quasi} for instance) tells us that
\[
\p(X_i = \mathbf{x} | X_i \neq \mathbf{0}) \to \lambda(\mathbf{x})
,
\]
where $\lambda$ is a probability measure supported on $\{0,1\}^{k}\setminus\{(0,\dots,0)\}$, called Yaglom limit.
So the conditional probabilities $ r_i $ have a limit, namely
\[
r_{p,q}
:=
\lim_i r_i =\frac{\lambda(\{\bx:x_1=x_k=1\})}{\lambda(\{\bx:x_k=1\})}
.
\]

Now notice that, for every $\varepsilon>0$, we have for sufficiently large $i$ that
\begin{equation}\label{eq:second_rec}
p u_{i-1} + q u_{i-k} - pq (r_{p,q}+\varepsilon) u_{i-1}\le u_i\le p u_{i-1} + q u_{i-k} - pq (r_{p,q}-\varepsilon) u_{i-1}.
\end{equation}
Just as we obtained bounds on $q_c=q_c(p,d,1,k)$ using recursive bounds \eqref{eq:upper3} and \ref{eq:lower3} on $u_i$, we  obtain, using the recursive bounds given by \eqref{eq:second_rec}, 
\[
\frac{1-pd}{d^k-p(r_{p,q_c}-\varepsilon)d}
\le
q_c
\le
\frac{1-pd}{d^k-p(r_{p,q_c}+\varepsilon)d}.
\]
Since these inequalities hold for every $\varepsilon>0$, we get
\[
q_c = \frac{1 - pd}{d^k - p r_{p,q_c} d}
.
\]

Since the critical curve is strictly above the line $q=\frac{1-pd}{d^k}$ (by Theorem \ref{thm:bounds}), which in turn has slope $-d^{1-k}$, Conjecture~\ref{conj:convexity} follows from the fact that
\[
q_c'(0) = -d^{1-k}
.
\] 
It remains to justify this fact.
For $ p $ small and $ r $ also small, we expand the previous expression for $q_c$ in terms of $ p $ and $ r$ as two independent variables, thus getting
\[
q_c = \frac{1 - pd}{d^k - p r d}
=
d^{-k}
+
\bigg[\frac{\partial}{\partial p}\frac{1 - pd}{d^k - p r d}\bigg]_{p=0,r=0}
p
+
\bigg[\frac{\partial}{\partial r}\frac{1 - pd}{d^k - p r d}\bigg]_{p=0,r=0}
r
+
o(p+r)
.
\]
The first partial derivative gives $ -d^{1-k} $ and the other vanishes.
Assumption~\eqref{eq:assumption_conjecture} implies that $ r_{p,q_c(p)} = O(p) $,
so the above expansion with $r = r_{p,q_c(p)}$ gives
$ q_c = d^{-k} - d^{1-k} p + o(p) $, showing that
$q_c'(0) = -d^{1-k}$ as claimed.

\section*{Acknowledgements}

This work has been supported by FAPESP through grants 2017/10555-0, 2023/13453-5, 2023/07507-5 and 2024/06341-9.

%is part of FAPESP Thematic Projects ``Stochastic Systems Modeling'' (2023/13453-5) and ``Stochastic Modeling of Interacting Systems'' (2017/10555-0). SG also thanks FAPESP (BPE 2024/06341-9 and Regular 2023/07507-5).

%This work is part of FAPESP Thematic Projects ``Stochastic Systems Modeling'' (2023/13453-5) and ``Stochastic Modeling of Interacting Systems'' (2017/10555-0). SG also thanks FAPESP (BPE 2024/06341-9 and Regular 2023/07507-5).

\bibliographystyle{jtbnew}
\bibliography{refperco}

{
Olivier Couronné \\
\small{Modal’X, Paris Nanterre University, France}\\
\small{\texttt{olivier.couronne@parisnanterre.fr} }
\vspace{0,5cm}
\\
Sandro Gallo \\ 
\small{Departament of Statistics, Federal University of São Carlos, Brazil}\\
\small{\texttt{sandro.gallo@ufscar.br} }
\vspace{0,5cm}
\\
Leonardo T. Rolla  \\
\small{Institute of Mathematics, Statistics and Computer Science, University of São Paulo, Brazil}\\
\small{\texttt{leonardo.rolla@gmail.com}} }

\end{document}